\newtheorem{theorem}{Theorem}[section]
\newtheorem{corollary}[theorem]{Corollary}
\newtheorem{proposition}[theorem]{Proposition}
\newtheorem{lemma}[theorem]{Lemma}
\newtheorem{observation}[theorem]{Observation}
\newtheorem{remark}[theorem]{Remark}
\newtheorem{sstheorem}{Theorem}[subsection]
\newtheorem{ssproposition}[sstheorem]{Proposition}
\newtheorem{sslemma}[sstheorem]{Lemma}
\newtheorem{ssremark}[sstheorem]{Remark}
\newcommand{\ssection}[1]{%
  \section[#1]{\centering\normalfont\scshape #1}}
\newcommand{\ssubsection}[1]{%
  \subsection[#1]{\raggedright\normalfont\itshape #1}}
\author{Holly Krieger}
\date{\today}
\title{Primitive prime divisors in the critical orbit of $z^d+c$}
\begin{document}

\maketitle

\begin{abstract} 
We prove the finiteness of the Zsigmondy set associated to the critical orbit of $f(z) = z^d+c$ for rational values of $c$ by uniformly bounding the size of the Zsigmondy set for all $c \in \mathbb{Q}$ and all $d \geq 2$.  We prove further that there exists an effectively computable bound $M(c)$ on the largest element of the Zsigmondy set, and that under mild additional hypotheses on $c$, we have $M(c) \leq 3$.
\end{abstract}

\ssection{Introduction} \label{Introduction}
The {\it Zsigmondy set} of a sequence $\{ a_ n \}$ of integers is the set of indices $n$ for which $\{ a_n \}$ fails to be divisible by a {\it primitive prime divisor}; i.e., $n$ is in the Zsigmondy set if for every prime dividing $a_n$, there exists $1 \leq k < n$ such that $p \mid a_k$.  The notion of a Zsigmondy set originated from a theorem of Bang \cite{Bang:TU} and Zsigmondy \cite{Zsigmondy:ZTDP} characterizing the Zsigmondy set of the sequence $\{a^n - b^n\}$ for coprime integers $a > b > 0$.  This type of result was then extended to the setting of Lucas sequences \cite{Carmichael:OTNF}, \cite{Schinzel:PDOT}, elliptic divisibility sequences \cite{Silverman:WCAT}, and sequences associated to the iteration of rational functions \cite{IngramSilverman:PDIA}.  In \cite{IngramSilverman:PDIA}, Ingram and Silverman proved that the Zsigmondy set of a sequence associated to iteration of certain rational functions is finite.  Rice \cite{Rice:PPDI} has proved the finiteness of the Zsigmondy set associated to the critical orbit of $f(z) = z^d+c$ for any $c \in \mathbb{Z}$, and Doerksen and Haensch \cite{DoerksenHaensch:PPDI} explicitly characterized the Zsigmondy set in this case. \\

In this article, we study the Zsigmondy set of the critical orbit of $z^d+c$ for $c \in \mathbb{Q}$.  Supposing that $c = \frac{a}{b}$ in lowest terms, the $n$th iterate of 0 can by induction be written 

$$f^n(0) = \frac{a_n}{b^{d^{n-1}}}$$
for some integer $a_n$ coprime to $b$.  Consequently, one sees that the critical orbit is infinite unless $c \in \{0, -1, -2\}$.  In this paper, we resolve the question of the finiteness of the Zsigmondy set, which we denote by $Z(f, 0)$, finding a bound on the size of the Zsigmondy set which is uniform in both $d$ and $c$: 

\begin{theorem} \label{DAEff} Let $f(z) = z^d + c$ and $c \in \mathbb{Q}$ such that the critical orbit is infinite.  Then $\# \mathcal{Z}(f, 0) \leq 23.$ \\
\end{theorem}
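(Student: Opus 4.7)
The plan is to work with the integer sequence $\{a_n\}$ given by the recurrence $a_n = a_{n-1}^d + a b^{d^{n-1}-1}$, with $a_1 = a$, which follows from iterating $f$ on the formula in the introduction. Since $a$ divides every $a_n$, no prime of $a$ can be primitive for $n \geq 2$, and primes of $b$ never divide any $a_n$. Hence $\mathcal{Z}(f,0)$ is controlled entirely by primes $p \nmid ab$. A first useful observation is that for such $p$, the relation $a_n \equiv a_{n-1}^d + ab^{d^{n-1}-1} \pmod p$ forces $p \nmid a_{n-1}$ whenever $p \mid a_n$, so shared prime divisors between consecutive $a_n$'s are automatically excluded.

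The heart of the argument is a bound on the contribution of non-primitive primes to the absolute value of $a_n$. If $p \nmid ab$ divides both $a_m$ and $a_n$ with $m < n$, then $f^m(0) \equiv f^n(0) \equiv 0 \pmod p$, so $0$ lies on a periodic cycle of $f$ modulo $p$ of exact period $\ell \mid (n-m)$. Because $0$ is critical and $f'(0) = 0$, this cycle is superattracting, and the Taylor expansion of $f^{\ell}$ at $0$ has the shape $f^{\ell}(z) = u z^{d^{\ell}} + \cdots$ with $u$ a $p$-adic unit. This yields quantitative inequalities $v_p(a_{m+j\ell}) \leq d^{j} v_p(a_m) + O_d(1)$ along the orbit, with a mild additional correction at the finitely many primes $p \mid d$, and hence a global bound on $\prod_{p \text{ non-primitive}} p^{v_p(a_n)}$.

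Matching this estimate against the archimedean lower bound $\log|a_n| \geq d^{n-1}\log\max(|a|,|b|) - O_d(1)$, which follows by iterating the recurrence once we know the critical orbit is infinite, shows that the primitive part of $a_n$ is nontrivial for every $n$ beyond an effective threshold $N_0$. Making this threshold uniform in $(c,d)$ requires a uniform positive lower bound on the critical canonical height $\hat h_f(0)$ away from the (finite, excluded) set of post-critically finite parameters; for the rational family $z^d+c$ this can be extracted from the existing height machinery. The main obstacle, and the place the specific constant $23$ must be pinned down, is calibrating the threshold in the remaining delicate regimes: small degrees ($d = 2, 3$), primes $p \mid d$ where the superattracting estimate degrades, and parameters $c$ of small critical canonical height approaching the PCF locus. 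I expect the final bound $\#\mathcal{Z}(f,0) \leq 23$ to combine the asymptotic argument for $n$ beyond an absolute constant with a careful, likely computer-assisted, enumeration of which small indices $n \leq N_0$ can fail to carry a primitive prime divisor.
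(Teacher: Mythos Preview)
Your proposal has a genuine gap that makes the strategy unworkable in the key case. The rigid divisibility part is fine (and in fact stronger than you state: one has $v_p(a_n)=v_p(a_{k(p)})$ exactly whenever $k(p)\mid n$, and $0$ otherwise, so the non-primitive part of $a_n$ divides $\prod_{q\mid n} a_{n/q}$). The failure is in the archimedean lower bound.

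You assert that $\log|a_n|\ge d^{n-1}\log\max(|a|,|b|)-O_d(1)$ once the critical orbit is infinite. This is false precisely in the regime that carries all the difficulty, namely $d$ even and $c\in(-2^{1/(d-1)},-1)$. There the entire critical orbit satisfies $|f^n(0)|\le |c|<2$, so $|a_n|=|f^n(0)|\cdot b^{d^{n-1}}$, and nothing prevents $|f^n(0)|$ from being as small as $b^{-(1-\varepsilon)d^{n-1}}$ for some $n$; in that case $|a_n|\approx b^{\varepsilon d^{n-1}}$, far below your claimed bound. Canonical heights do not rescue this: $\hat h_f(0)$ is indeed bounded below (for $b\ge 2$ one gets $\hat h_f(0)=d^{-1}\log b$ in this range), but $h(f^n(0))=\log\max(|a_n|,b^{d^{n-1}})$, and in the recurrent case the maximum is always the denominator. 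So the height machinery tells you the size of $b^{d^{n-1}}$, not of $|a_n|$. Your scheme of comparing the non-primitive part against a height-based lower bound is exactly what the paper does in the \emph{non}-recurrent cases (Section~\ref{tight}), where $|f^n(0)|>1$ and $h(f^n(0))=\log|a_n|$; there it gives $M(c)=2$ easily.

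The recurrent case is the whole content of the theorem, and the paper handles it with Diophantine approximation rather than heights. The observation is that $|f^n(0)|$ being tiny means $|f^{n-1}(0)|$ is an extremely good rational approximation to the real algebraic number $|c|^{1/d}$, with denominator $b^{d^{n-2}}$. Mahler's quantitative refinement of Thue's theorem then bounds how many such approximations can exist with denominators growing this fast; working out the constants yields at most $N_d\le 6$ such $n$ beyond an explicit threshold $2m_d+6\le 18$, and $23=2m_d+6-1+N_d$ is the worst case ($d=2$). No uniform Lehmer-type input or computer search is involved. A minor side remark: your Taylor expansion claim is also off --- the first nonconstant term of $f^\ell(z)-f^\ell(0)$ has degree $d$, not $d^\ell$ --- but this does not matter since rigid divisibility follows from the elementary identity $f^n(z)=z\,g_n(z)+f^n(0)$ with $g_n(0)=0$.
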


The heart of this result is Mahler's \cite{Mahler:OTCF} refinement of Thue's precursor \cite{Thue:UAAZ} to Roth's theorem on rational approximations of $d$th roots of integers.  Utilizing the rapid growth of the denominator of $f^n(0),$ the existence of multiple sufficiently large elements of the Zsigmondy set would give multiple extremely good rational approximates to a certain algebraic integer, contradicting Mahler's result.   We also use a result of Bennett and Bugeaud \cite{BennettBugeaud:ERFR} on approximation of quadratic irrationals to establish the existence of an effective (though non-uniform) bound on the largest element of $\mathcal{Z}(f, 0)$:\\

\begin{theorem} \label{EffExists} Suppose $c \in \mathbb{Q}$, and $f(z) = z^d+c$ such that the critical orbit is infinite.  Then there exists an effectively computable constant $M(c)$ such that for all $n > M(c)$, $n \notin \mathcal{Z}(f,0)$.  
\end{theorem}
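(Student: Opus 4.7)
The plan is to use the Zsigmondy hypothesis to extract a very good rational approximation to an algebraic number of degree $d$, and then contradict it by an effective Diophantine lower bound---the Bennett--Bugeaud theorem when $d=2$, and an effective version of Thue's theorem (via Baker's method on linear forms in logarithms) when $d \geq 3$. For $n \in \mathcal{Z}(f,0)$, write $a_n = u_n t_n^d$ with $u_n$ its $d$th-power-free part. The Zsigmondy condition forces every prime divisor of $u_n$ to divide some $a_k$ with $k < n$, so $u_n$ is multiplicatively constrained by the earlier orbit. Combining this factorization with the recursion $a_{n-1}^d = a_n - ab^{d^{n-1}-1}$ derived in the introduction yields
\[
u_n t_n^d - a_{n-1}^d = a b^{d^{n-1}-1}.
\]

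Factoring the left-hand side over $\mathbb{Q}(\zeta_d, u_n^{1/d})$ and isolating the smallest linear factor $u_n^{1/d}\, t_n - a_{n-1}$, one finds that $a_{n-1}/t_n$ is a rational approximation to $u_n^{1/d}$ satisfying
\[
\Bigl| u_n^{1/d} - \tfrac{a_{n-1}}{t_n} \Bigr| \;\ll_c\; \frac{|b|^{d^{n-1}}}{|a_{n-1}|^{d-1}\,|t_n|}.
\]
Because the critical orbit is infinite, the height of $f^{n-1}(0)$ grows doubly exponentially in $n$, so the right-hand side shrinks much faster than any fixed power of $|t_n|$. For the matching lower bound, I plan to invoke: for $d \geq 3$, Baker's method providing effectively computable $\kappa < d$ and $C_1(u_n) > 0$ with $|u_n^{1/d}-p/q| > C_1(u_n)\, q^{-\kappa}$; and for $d = 2$, after disposing of the easy case in which $u_n$ is a perfect square, the Bennett--Bugeaud theorem recalled in the introduction, which supplies effective $\lambda(u_n) < 2$ and $C_2(u_n) > 0$ with $|u_n^{1/2}-p/q| > C_2(u_n)\, q^{-\lambda(u_n)}$. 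Combining these lower bounds with the upper bound above yields an inequality that must fail once $n$ exceeds a threshold depending on $u_n$, $a$, $b$, and $d$.

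The principal obstacle is that $u_n$ itself varies with $n$, and a priori the effective constants $C_i(u_n)$ and exponents $\lambda(u_n)$ could deteriorate faster than the approximation improves. The way out is to track the $u_n$-dependence explicitly: since $u_n \mid a_n$, the size of $u_n$ grows at most like $|b|^{d^{n-1}} \cdot \sup_m |f^m(0)|$, whereas the upper bound on the approximation error decays doubly exponentially in $n$ from the term $|a_{n-1}|^{-(d-1)}$. Consequently the polynomial-in-$u_n$ deterioration of the effective Diophantine constants is dominated by the double-exponential gain in approximation quality. Carrying out this trade-off quantitatively and tracking every constant produces the effectively computable $M(c)$ such that no $n > M(c)$ satisfies the Zsigmondy condition, completing the proof of Theorem~\ref{EffExists}.
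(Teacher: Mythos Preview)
Your proposal has a genuine gap that cannot be repaired without essentially abandoning the approach.

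The central problem is that the algebraic number you are approximating, $u_n^{1/d}$, varies with $n$, and its height can be as large as $|a_n|$ itself (there is nothing preventing $a_n$ from being $d$th-power-free, in which case $t_n = 1$ and $u_n = a_n$). The effective irrationality bounds you invoke are not uniform in the target: Feldman's effective improvement of Liouville gives $|u^{1/d} - p/q| > C(u)\,q^{-(d - \delta(u))}$ where both $C(u)$ and, crucially, the exponent gain $\delta(u)$ depend on $u$, with $\delta(u) \to 0$ as the height of $u$ grows (typically like a reciprocal logarithm). The same is true of the Bennett--Bugeaud exponent $\tau(\xi,b)$. So your assertion that the deterioration is ``polynomial in $u_n$'' is false for the exponent, and once $\kappa(u_n) \to d$ the comparison with your upper bound collapses. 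Moreover, when $t_n$ is small the approximation $a_{n-1}/t_n$ has small denominator, so even a sharp irrationality measure says nothing useful.

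The paper's proof sidesteps this entirely. After invoking Theorem~\ref{NREff} to reduce to $d$ even and $c \in (-2^{1/(d-1)}, -1)$, one writes $c = -\xi^2$ with $\xi = \sqrt{|c|}$ a \emph{fixed} quadratic irrational, independent of $n$. Since $d$ is even, $|f^{n-1}(0)|^{d/2}$ is a rational number with denominator $b^{d^{n-1}/2}$, and
\[
|f^n(0)| = \bigl| |f^{n-1}(0)|^{d/2} - \xi \bigr| \cdot \bigl| |f^{n-1}(0)|^{d/2} + \xi \bigr| > \bigl| |f^{n-1}(0)|^{d/2} - \xi \bigr|.
\]
Now Bennett--Bugeaud is applied \emph{once}, to the fixed pair $(\xi, b)$, yielding effectively computable constants $\epsilon, \tau$ with $\|b^m \xi\| > \epsilon\, b^{-(1-\tau)m}$ for all $m$. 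This gives $|f^n(0)| > \epsilon\, b^{-d^{n-1}(1-\tau/2)}$ with constants that do not move with $n$, and plugging into inequality~(\ref{eqn2}) produces the desired effective $M(c)$. The key structural point you are missing is that one should approximate a fixed algebraic number built from $c$, not a moving one built from $a_n$.
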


Though the proof of Theorem \ref{EffExists} generally yields large values of $M(c)$, it is the case that with various additional assumptions, $M(c)$ is quite small.  Namely, we have the following bounds:

\begin{theorem} \label{NREff} Let $f(z) = z^d + c$ with $d \geq 2$ and $c = \frac{a}{b} \in \mathbb{Q}$ in lowest terms.  If $d$ is odd, or $d$ is even and $c \notin (-2^{\frac{1}{d-1}}, -1),$ then we can take $M(c) = 2$.  Further, $\mathcal{Z}(f, 0)$ is empty unless $d = 2$ and $a + b = \pm 1$, or $c = \pm 1$.
\end{theorem}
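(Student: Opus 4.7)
The plan has two pieces: a direct factorization to identify the small-$n$ exceptions, and a growth-based comparison for $n \geq 3$ to force a primitive prime divisor.

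For $n = 2$, iterating the recursion $a_n = a_{n-1}^d + a b^{d^{n-1}-1}$ (equivalent to $f^n(0) - c = f^{n-1}(0)^d$) yields $a_2 = a(a^{d-1} + b^{d-1})$, and since $\gcd(a^{d-1}+b^{d-1},ab)=1$, the primitive part of $a_2$ equals exactly $a^{d-1}+b^{d-1}$. For $d$ odd this is a sum of two coprime nonnegative $(d-1)$th powers (even exponent), so it fails to equal $1$ outside degenerate cases. For $d$ even, the factorization $a^{d-1}+b^{d-1} = (a+b)(a^{d-2} - a^{d-3}b + \cdots + b^{d-2})$ forces both factors to be $\pm 1$ simultaneously, which occurs only when the second factor is vacuous, i.e.\ when $d = 2$ and $a+b = \pm 1$. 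The case $n = 1$ reduces to $a_1 = a = \pm 1$, which under the lowest-terms normalization corresponds (together with the relevant $b$) to the exception $c = \pm 1$.

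For $n \geq 3$, I argue that the primitive part of $a_n$ is always nontrivial. The key characterization is that a prime $p \nmid b$ divides $a_n$ iff $\operatorname{ord}_p(0) \mid n$, where $\operatorname{ord}_p(0)$ is the eventual period of $0$ under $f \bmod p$; such $p$ is primitive for $a_n$ exactly when $\operatorname{ord}_p(0) = n$. Using $f'(0) = 0$, a dynamical valuation estimate gives $v_p(a_{mk}) = v_p(a_k)$ whenever $\operatorname{ord}_p(0) = k$ and $p$ is of good reduction, which bounds the non-primitive part of $a_n$ above by $\prod_{k \mid n, k < n} |a_k|$ (with a controlled correction otherwise). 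Since $|a_n| = |x_n| \cdot b^{d^{n-1}}$ while $\prod_{k \mid n, k<n} |a_k|$ carries exponent $\sum_{k \mid n, k<n} d^{k-1}$, the elementary inequality $d^{n-1} > \sum_{k \mid n, k<n} d^{k-1}$ (valid for $n \geq 3$, $d \geq 2$), combined with the hypothesis on $c$ — guaranteeing either that the critical orbit escapes $|x_n|\to\infty$ (automatic for $d$ odd; for $d$ even, this is exactly what the exclusion $c \notin (-2^{1/(d-1)},-1)$ secures via $|c|^{d-1} \geq 2$ when the orbit is unbounded) or, in the bounded-orbit case, that $b \geq 2$ so the denominator factor alone provides the needed gap — forces $|a_n|$ strictly to exceed its non-primitive part.

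The main obstacle is tightness of the comparison at $n = 3$, where the only proper divisor of $n$ is $1$ and so the non-primitive part is supported on primes dividing $a$. Unwinding the recursion gives $a_3/a = a^{d-1}(a^{d-1}+b^{d-1})^d + b^{d^2-1}$, and after the substitution $u = a^{d-1}+b^{d-1}$, $v = b^{d-1}$ (so that $a^{d-1} = u-v$ and $b^{d^2-1} = v^{d+1}$), the condition $3 \in \mathcal{Z}(f,0)$ becomes the Thue-type equation
\[
u^{d+1} - u^d v + v^{d+1} = \pm 1
\]
in coprime integers $(u,v)$. A direct enumeration of the few small solutions (permitted since the degree is $d+1 \geq 3$) pulls back via the substitution to values of $c$ in the excluded set $\{0, -1, -2\}$ or to $c$ lying in the forbidden interval for $d$ even. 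Once $n = 3$ is settled, the super-exponential gap in the growth comparison makes $n \geq 4$ essentially automatic.
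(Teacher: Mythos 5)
Your proposal correctly identifies the two-part structure (an explicit factorization for small $n$, a growth comparison for larger $n$) and your $n=2$ analysis via $a_2 = a(a^{d-1}+b^{d-1})$ matches the paper's Proposition on $n=2$. You also correctly recover that $3 \in \mathcal{Z}(f,0)$ is equivalent to a binary form identity: with $u = a^{d-1}+b^{d-1}$, $v = b^{d-1}$, one has $a_3 = a\bigl(u^{d+1} - u^d v + v^{d+1}\bigr)$, and the parenthesized factor is coprime to $a_1 a_2$, so $3 \in \mathcal{Z}(f,0)$ iff $u^{d+1} - u^d v + v^{d+1} = \pm 1$. That identity is genuinely correct and a nice observation.

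However, there are two real gaps. First, the $n \geq 3$ comparison: you reduce to comparing $d^{n-1}$ against $\sum_{k \mid n,\, k<n} d^{k-1}$, but this exponent inequality only wins if the archimedean factor $|f^n(0)|$ can be bounded below by a positive constant independent of $n$. You assert this follows because the critical orbit escapes under the hypotheses, but that is false: the hypotheses of the theorem include $c > 0$ small (for which the orbit converges to an attracting fixed point), $c \in (-1,0)$ with $d$ even (bounded orbit), and $c \in (-2^{d/(d-1)}, -2^{1/(d-1)})$ (where the orbit may be bounded). The actual content needed, and what the paper establishes case by case, is a lower bound of the form $|f^n(0)| \geq \text{const} > 0$ (or, when $|c| > 2^{d/(d-1)}$, a canonical-height lower bound via Ingram's lemma). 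Your deferral of the bounded case to ``the denominator factor alone provides the needed gap'' is exactly where the argument breaks: if $|f^n(0)|$ is allowed to be as small as $b^{-\alpha d^{n-1}}$ for some $\alpha > 0$, the denominator gap does not carry the comparison — that is precisely why Sections 3 and 4 of the paper need Diophantine approximation for the recurrent case.

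Second, the $n=3$ Thue equation: you invoke ``direct enumeration permitted since the degree is $d+1 \geq 3$,'' but Thue's theorem gives finiteness of solutions without an explicit list, and the degree here grows with $d$, so a fixed enumeration cannot be uniform. More to the point, the equation $u^{d+1} - u^d v + v^{d+1} = \pm 1$ does have solutions corresponding to $c$ in the excluded interval — e.g.\ $d=2$, $c = -7/4$ gives $(u,v) = (-3,4)$ and $-27 - 36 + 64 = 1$ — so no unconditional solution of the Thue equation can prove the theorem; the exclusion hypothesis on $c$ must be used. The paper sidesteps the Thue equation entirely and establishes $|f^3(0)| \geq |c| > 1$ (in the relevant ranges) directly, which forces the parenthesized factor to exceed $1$ in absolute value. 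Replacing your Thue-equation step with that direct size estimate, and establishing the lower bound on $|f^n(0)|$ in each of the cases $c > 0$; $c < 0, d$ odd; $-1 < c < 0, d$ even; $2^{1/(d-1)} < |c| < 2^{d/(d-1)}$; and $|c| > 2^{d/(d-1)}$, would close the gaps and align your argument with the paper's proof.
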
 

Even in the case when $d$ is even and $c \in (-2^{\frac{1}{d-1}}, -1)$, one can provide a lower bound for the recurrence of 0 for parameters $c$ which are not too close to any parameter with finite critical orbit.  We do this when $d = 2$, considering $c$ as a complex parameter for $f_c(z) = z^2 + c$.  In particular, given $n \geq 1,$ fix $\rho_n > 0$, and define $D(n, \rho_n)$ to be the set of complex parameters $c$ such that 0 lies in an attracting basin of a complex number $a$ with exact period $n$ satisfying $|(f_c^n)'(a)| \leq \rho_n$.  

\begin{theorem} \label{Mandelbrot}Define $D(n, \rho_n)$ as above with $\rho_n = \min \{ \frac{1}{4},  \frac{1}{2^{2^{n-2}}} \}$.  Write

$$S := \mathbb{C} - \bigcup_{n \in \mathbb{N}} D(n, \rho_n).$$
Then for all $c = \frac{a}{b} \in S$, writing $f(z) = z^2+c$, we can take $M(c) = 3$.
\end{theorem}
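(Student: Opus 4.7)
The plan is to combine the Bennett--Bugeaud-based rational approximation argument of Theorem \ref{EffExists} with a complex-dynamical lower bound on $|f_c^k(0)|$ supplied by the hypothesis $c \in S$. Theorem \ref{NREff} already gives $M(c) \leq 2$ when $c \notin (-2^{1/(d-1)}, -1)$; since the present theorem is stated only for $d = 2$, I restrict throughout to rational $c = a/b \in (-2, -1) \cap S$.

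Suppose $n \geq 4$ lies in $\mathcal{Z}(f, 0)$. As in \cite{IngramSilverman:PDIA} and \cite{Rice:PPDI}, the absence of a primitive prime divisor of $a_n$, together with the recursion $a_{n+1} = a_n^2 + a\, b^{2^n - 1}$ and $p$-adic control on the exponents to which the non-primitive primes of $a_n$ can appear, yields an upper bound $|a_n| \leq B_n$ with $B_n$ growing strictly slower than the denominator $b^{2^{n-1}}$. Equivalently, $f^{n-1}(0)$ approximates one of the two roots of $z^2 + c - f^{n-2}(0) = 0$ to a precision better than what \cite{BennettBugeaud:ERFR} permits for rationals of denominator $b^{2^{n-2}}$. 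That bound then gives an immediate contradiction \emph{unless} some earlier iterate $|f^k(0)|$ for $1 \leq k \leq n - 1$ is abnormally small, specifically at most the threshold $\rho_k$.

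The hypothesis $c \in S$ closes this loophole. If $|f_c^k(0)| \leq \rho_k$, a Schwarz-lemma or Koebe-distortion estimate for the polynomial $f_c^k$ on a round disk around $0$ whose radius is controlled by the diameter of the invariant interval (the critical orbit stays in $[-2, 2]$ for $c \in (-2, -1)$) places $0$ in the immediate attracting basin of a periodic point $a$ whose exact period divides $k$, with multiplier bound $|(f_c^k)'(a)| \leq C \rho_k$ for an absolute constant $C$. After absorbing $C$ into the prefactor $\tfrac{1}{4}$ of $\rho_k$, this puts $c \in D(k, \rho_k)$ and contradicts $c \in S$. Putting the arithmetic and dynamical halves together forces $n \leq 3$, which is $M(c) = 3$.

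The main obstacle is the calibration in the dynamical step. The Koebe constant and the radius of the initial disk must be sharp enough that the resulting multiplier estimate is genuinely $O(\rho_k)$ rather than $O(\rho_k^{\alpha})$ for some $\alpha < 1$, so that the condition in the definition of $S$ with $\rho_k = 2^{-2^{k-2}}$ is no stronger than the arithmetic side actually requires. The doubly exponential matching between $\rho_k$ and the denominator growth $b^{2^{k-1}}$ is exactly what allows the threshold $M(c) = 3$ to be uniform on $S$; the verification at $n = 4$ should be the tightest and is expected to reflect the sharpness of Bennett--Bugeaud for approximation of $\sqrt{m}$.
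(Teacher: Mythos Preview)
Your proposal has a genuine gap in the dynamical step, and it also misidentifies the arithmetic input.

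On the dynamical side, you assert that if $|f_c^k(0)| \leq \rho_k$ then a Schwarz- or Koebe-type estimate in the \emph{dynamical} plane forces $0$ into the immediate basin of an attracting cycle with multiplier $O(\rho_k)$. This implication does not follow from distortion of $f_c^k$ on a disk around $0$: smallness of $f_c^k(0)$ says nothing a priori about the existence of an attracting periodic point, and the Koebe argument you sketch controls the wrong quantity. The paper instead works in \emph{parameter} space. The map $c \mapsto f_c^n(0)$ is a polynomial in $c$ whose zeros are exactly the centers of hyperbolic components of period dividing $n$; hence on a region avoiding the sets $D(k,\rho)$ for $k \mid n$ it is nonvanishing, and the Maximum-Modulus Principle pushes the minimum of $|f_c^n(0)|$ to the boundary $\partial D(k,\rho)$. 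Only then, for a boundary parameter, does one conjugate $f_c^n$ on the immediate basin to a Blaschke product and invoke de Branges' theorem to bound $|a|/|f_c^n(0)|$ by a constant, together with the chain-rule identity $\rho = 2^n \prod_{j<n} |f_c^j(a)|$ to bound $|a|$ below. This yields the uniform lower bound $|f_c^n(0)| \geq \rho_n \cdot 2^{-(2n+2)}$ for all $c \in S$, which is the contrapositive of what you were aiming for but proved by a completely different mechanism.

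On the arithmetic side, Bennett--Bugeaud is neither used nor helpful here: as the paper notes after Theorem~\ref{EffExists}, its constants depend on $c$ through the regulator of $\mathbb{Q}(\sqrt{c})$ and cannot give a uniform threshold like $M(c)=3$. Once the lower bound on $|f_c^n(0)|$ is in hand, the paper simply substitutes it and the upper bound $|f_c^k(0)| \leq |c| < 2$ into inequality~(\ref{eqn2}), obtaining
\[
(2^n - s_2(n))\log b < (2\omega(n) + 4n + 4 + 2^{n-1})\log 2,
\]
which fails for $n \geq 7$ (and for $n \geq 3$ once $b \geq 13$); the finitely many remaining $(n,b)$ are checked directly. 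Your conditional structure ``Bennett--Bugeaud contradicts unless some earlier $|f^k(0)|$ is small'' does not appear and would not produce the stated uniform bound.
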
 

The layout of this paper is as follows: in Section \ref{Prelim}, we establish preliminary lemmas which allow an arithmetic characterization of $n \in \mathcal{Z}(f, 0)$; in particular, $n \in \mathcal{Z}(f, 0)$ provides an upper bound on size of the numerator of $f^n(0)$.  Section \ref{Eff1} contains the proof of Theorem \ref{DAEff}, utilizing a result of Mahler to make effective the general notion that an iterate $f^n(0)$ with small numerator will yield a rational approximate to the $d$th root of $c$ which is too good.  The existence of an effective bound $M(c)$ is established in Section \ref{Eff2} using the same idea, and Theorem \ref{NREff} is established in Section \ref{tight} via the theory of canonical heights. In Section \ref{Mand}, Theorem \ref{Mandelbrot} is proved, using de Branges' theorem to find a lower bound for the numerator of $|f^n(c)|$ for those values of $c$ which are not too close to centers of hyperbolic components of the Mandelbrot set, obstructing $n \in \mathcal{Z}(f, 0)$ for $n > 3$.\\

{\bf Related Questions.} 
Though in the interest of length we have restricted ourselves to the rational case, the majority of these results have immediate analogues if we allow $c$ to be an algebraic number, and ask about the prime ideal divisors of the numerators of the ideals generated by the iterates of $0$.  Again it is not hard to show that $M(c) = 2$ in the integral case, but the non-integral case requires more machinery.  An application of quantitative Roth's theorem results such as \cite{Silverman:WCAT} will yield a bound on the size of the Zsigmondy set in this case, generalizing Theorem \ref{DAEff}, though the bound will no longer be uniform in $d$.  The non-recurrence statements of Theorem \ref{NREff} and \ref{Mandelbrot} are independent of choice of archimedean norm, and so can also be applied to compute $M(c)$ in these cases. \\

Zsigmondy questions of this sort also connect to broader problems in number theory and arithmetic dynamics.  Recently, Gratton, Nguyen, and Tucker have shown (personal communication) that the $abc$ conjecture implies a finite Zsigmondy set for the numerator sequence of any infinite orbit under rational iteration.  Silverman and Voloch have shown \cite{SilvermanVoloch:ALGC} that the Zsigmondy result of \cite{IngramSilverman:PDIA} can be used to prove that there is no dynamical Brauer-Manin obstruction for dimension 0 subvarieties under morphisms $\phi: \mathbb{P}^1(K) \rightarrow \mathbb{P}^1(K)$ of degree at least 2, while Faber and Voloch have utilized the Zsigmondy result of \cite{IngramSilverman:PDIA} in studying non-archimedean convergence of Newton's method \cite{FaberVoloch:OTNO}. \\ 

Another related area of interest is the question of the density of prime divisors of the critical orbit.  Jones \cite{Jones:TDOP} has shown that for $d=2$, when $c \in \mathbb{Z}$ is critically infinite, the density of primes $p$ dividing some element of the critical orbit is 0.  This is in spite of the result of \cite{DoerksenHaensch:PPDI} that for each $n \geq 3$ we have a primitive prime divisor, so one could ask whether we have the same phenomenon for $c \in \mathbb{Q}$.  Similarly, one can ask Zsigmondy questions about other sequences related to dynamical systems, and Faber and Granville \cite{FaberGranville:PFOD} have proven (barring an obvious obstruction) that for any fixed $\Delta \in \mathbb{N}$ and any $\phi \in \mathbb{Q}(z)$, the sequences of numerators of differences $\phi^{n+\Delta}(x)-\phi^n(x)$ have finite Zsigmondy set for any point $x \in \mathbb{Q}$ with infinite forward orbit. \\

\ssection{Preliminary Results.} \label{Prelim}

Throughout we write $f(z) = z^d + c$, where $d \geq 2$ and $c = \frac{a}{b} \in \mathbb{Q}$ in lowest terms, choosing $b$ positive.  

\begin{observation} With the above notation, the $n$th iterate $f^n(0)$ is written in lowest terms as

$$f^n(0) = \frac{a_n}{b^{d^{n-1}}},$$
for some $a_n \in \mathbb{Z} \setminus \{ 0 \}$ coprime to $b$.  Consequently, the critical orbit is infinite for all $b \geq 2$.
\end{observation}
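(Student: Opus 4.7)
The plan is to argue by induction on $n$. The base case $n=1$ is immediate: $f(0) = c = a/b$, and since $c$ is presented in lowest terms we already have $a_1 = a$ coprime to $b$, while the denominator matches the claim because $b^{d^0} = b$.

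For the inductive step, suppose $f^n(0) = a_n/b^{d^{n-1}}$ with $\gcd(a_n, b) = 1$. Then
\[
f^{n+1}(0) \;=\; \left(\frac{a_n}{b^{d^{n-1}}}\right)^{\!d} + \frac{a}{b} \;=\; \frac{a_n^{\,d} + a\, b^{d^n - 1}}{b^{d^n}},
\]
so the natural candidate is $a_{n+1} := a_n^{\,d} + a\, b^{d^n - 1}$. The crucial point is that $n \geq 1$ and $d \geq 2$ force $d^n - 1 \geq 1$, so $b$ divides the second summand and $a_{n+1} \equiv a_n^{\,d} \pmod b$. Combined with the inductive hypothesis, this gives $\gcd(a_{n+1}, b) = 1$, confirming that the displayed fraction is already in lowest terms and that its denominator is $b^{d^{n+1-1}}$.

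The nonvanishing of $a_n$ and the infiniteness of the orbit when $b \geq 2$ then come together. If $a_{n+1} = 0$ we would have $a_n^{\,d} = -a\,b^{d^n - 1}$; the right-hand side is divisible by $b$ (again using $d^n - 1 \geq 1$), forcing $b \mid a_n$, which contradicts $\gcd(a_n, b) = 1$ whenever $b \geq 2$. Consequently $a_n \neq 0$ for all $n$ in that case. Moreover, since $b \geq 2$ makes the exponents $d^{n-1}$ strictly increase with $n$, the reduced denominators $b^{d^{n-1}}$ are pairwise distinct, so the iterates $f^n(0)$ themselves are pairwise distinct and the critical orbit is infinite.

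There is no real obstacle here: the proof is essentially a bookkeeping induction, and all the content is packaged in the elementary observation that the cross term $a\,b^{d^n - 1}$ vanishes modulo $b$ because $d^n - 1 \geq 1$. I would simply verify the base case, set up the modular computation carefully, and read off both the arithmetic normal form and the infiniteness conclusion.
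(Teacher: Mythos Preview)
Your argument is correct and is exactly the induction the paper has in mind; the paper records this as an Observation and only remarks in the introduction that the form of $f^n(0)$ follows ``by induction,'' leaving the details you have supplied to the reader. Your handling of the nonvanishing of $a_n$ and the infiniteness of the orbit via the strictly increasing reduced denominators is likewise the intended reasoning.
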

We define the {\it Zsigmondy set} associated to $f$, $\mathcal{Z}(f,0)$, to be the set of indices $n \geq 2$ such that $a_n$ has no primitive prime divisor; i.e., for all primes $p$ dividing $a_n$, there exists $1 \leq k < n$ with $p \mid a_k$.  \\

The case $c \in \mathbb{Z}$ has been treated in \cite{DoerksenHaensch:PPDI}, rephrased here in our notation:

\begin{proposition} \label{DH} [Doerksen-Haensch]  Suppose $f(z) = z^d+c$ with $d \geq 2$ and $c \in \mathbb{Z}$ such that the critical orbit is infinite. Then $n \in \mathcal{Z}(f, 0) \Rightarrow n \leq 2$, and $\mathcal{Z}(f, 0)$ is empty unless $c = \pm 1$.
\end{proposition}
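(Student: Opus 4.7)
The plan is to extract a strong divisibility structure from the recursion $a_n = a_{n-1}^d + c$ and convert the Zsigmondy condition into an upper bound on $|a_n|$ incompatible with the doubly-exponential growth of the sequence. The core input is the congruence
$$a_{k+j} \equiv a_j \pmod{a_k^d} \qquad (k \geq 1,\ j \geq 1),$$
proved by induction on $j$: the base case $j = 1$ is the identity $a_{k+1} - a_1 = a_k^d$, and the inductive step follows from the binomial expansion of $(a_j + a_k^d M)^d$, in which every term beyond $a_j^d$ carries a factor of $a_k^d$ because $d \geq 2$. Two consequences follow. For a prime $p$ with smallest index of apparition $k_p$, reducing the congruence modulo $p$ gives $a_m \equiv a_{m+k_p} \pmod{p}$ for all $m \geq 0$, so $p \mid a_n$ forces $k_p \mid n$. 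Iterating the congruence along the chain $a_n, a_{n-k}, \dots, a_k$ when $k \mid n$ yields $a_n = a_k(1 + a_k^{d-1} N)$ for some $N \in \mathbb{Z}$, so $1 + a_k^{d-1} N$ is coprime to $a_k$, giving the lifting identity $v_p(a_n) = v_p(a_k)$ whenever $p \mid a_k$ and $k \mid n$.

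Now suppose $n \in \mathcal{Z}(f,0)$. Partitioning the primes dividing $a_n$ by rank of apparition yields
$$|a_n| = \prod_{\substack{k \mid n \\ k < n}} \prod_{\substack{p \mid a_n \\ k_p = k}} p^{v_p(a_k)} \;\leq\; \prod_{\substack{k \mid n \\ k < n}} |a_k|.$$
For $n \geq 3$ this clashes with the lower bound $|a_n| \geq |a_{n-1}|^d - |c|$: the left side is logarithmically of order $d^{n-1}$, while the right side is dominated by $\log|a_{n/q}|$ of order $d^{n/q - 1}$, with $q$ the smallest prime factor of $n$. For $n = 2$ the bound collapses to $|a_2| \leq |c|$, and since $a_2 = c(c^{d-1}+1)$ with $\gcd(c^{d-1}+1, c) = 1$, this forces $|c^{d-1}+1| \leq 1$; the integer solutions $c \in \{0, -1, -2\}$ (with appropriate parity of $d$) all correspond to a finite critical orbit. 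Hence $2 \notin \mathcal{Z}(f,0)$ under the hypothesis, and in particular $\mathcal{Z}(f,0) = \emptyset$ whenever $c \neq \pm 1$.

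The main technical obstacle is making the growth comparison for $n \geq 3$ fully rigorous for small $n$, especially in the slow-growth regime $c = \pm 1$ where $|a_n|^{1/d^{n-1}}$ converges to a number only slightly above $1$; direct verification for a few initial values of $n$ followed by induction using the multiplicative bound itself should close this gap.
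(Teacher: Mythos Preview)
The paper does not give its own proof of this proposition; it is quoted from Doerksen--Haensch with a citation, and the paper immediately restricts attention to the non-integral case $b \geq 2$. So there is no in-paper argument to compare yours against.

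Your approach is nonetheless correct and is exactly the rigid-divisibility mechanism that the paper develops for rational $c$ in Lemma~\ref{RDS} and its corollaries. Your congruence $a_{k+j} \equiv a_j \pmod{a_k^d}$ packages both the period-divisibility and the exact-valuation statements cleanly in the integer setting; the paper instead uses the polynomial identity $f^n(z) = z\,g_n(z) + f^n(0)$ with $g_n(0)=0$, which generalizes more directly to $c \in \mathbb{Q}$. Your divisor-product bound $|a_n| \leq \prod_{k \mid n,\, k<n} |a_k|$ is slightly looser than the paper's $a_n \mid \prod_q a_{n/q}$ over primes $q \mid n$, but either version suffices for the growth comparison.

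The gap you flag for $c = \pm 1$ and small $n$ is genuine: with $|a_1|=1$ the doubly-exponential lower bound does not engage immediately, and the inequality must be ruled out by hand for the first few $n$ before induction takes over once some $|a_k| \geq 2$. Your proposed remedy is the right one. Note also that your $n=2$ analysis already shows $2 \notin \mathcal{Z}(f,0)$ for \emph{every} integer $c$ with infinite orbit, so once the $n \geq 3$ gap is closed you will in fact have $\mathcal{Z}(f,0)=\emptyset$ unconditionally; the clause ``unless $c = \pm 1$'' in the stated proposition presumably reflects a convention in \cite{DoerksenHaensch:PPDI} that admits the index $n=1$, where $a_1 = \pm 1$ has no prime factors.
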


Our methods are different than those used in \cite{DoerksenHaensch:PPDI} and in fact will utilize the rapid growth of the denominators of the forward orbit.  Since Proposition \ref{DH} implies Theorems \ref{DAEff}, \ref{EffExists}, \ref{NREff}, and \ref{Mandelbrot} for integral values of $c$, we assume throughout that $b \geq 2$. \\

In this section, we quantify the statement $n \in \mathcal{Z}(f,0)$.  To begin, we note that the sequence $\{ a_n \}$ forms a rigid divisibility sequence: \\

\begin{lemma}  \label{RDS} Let $f(z)$ be as above.  Suppose $p$ is a prime which divides some element of the sequence $\{ a_n \},$ and let $k(p) \geq 1$ be the minimal natural number such that {\rm ord}$_p(a_{k(p)}) > 0$.  Then for every $n \in \mathbb{N}$, we have   
$$
{\rm ord}_p(a_n) =
\begin{cases}
{\rm ord}_p(a_{k(p)}), & \text{if } k(p) \mid n \\
0, & \text{else. }
\end{cases}
$$
\end{lemma}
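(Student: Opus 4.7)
The plan is to analyze $v_p(f^n(0))$ by splitting on whether $m := k(p)$ equals $1$ or is at least $2$, exploiting that the critical orbit lies in $\mathbb{Z}_p$. First I would record two preliminary facts: since $\gcd(a_n,b)=1$, any such prime $p$ cannot divide $b$, so $v_p(f^n(0)) = v_p(a_n)$ for every $n$; and $v_p(c) = v_p(a_1)$, which by minimality of $m$ forces $v_p(c) = 0$ whenever $m \geq 2$.

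In the case $m = 1$, we have $v_p(c) = e > 0$. I would prove by induction on $n$ that $v_p(f^n(0)) = e$. The step uses
$$v_p(f^{n+1}(0)) = v_p\bigl((f^n(0))^d + c\bigr),$$
where the first summand has valuation $de$ and the second valuation $e$; since $d \geq 2$ implies $de > e$, the ultrametric inequality gives equality at $e$.

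In the case $m \geq 2$, we have $\bar c \neq 0$ in $\mathbb{F}_p$, so the reduction $\bar f(z) = z^d + \bar c$ is a well-defined self-map of $\mathbb{F}_p$. Now $\bar f^m(0) = 0$, so $0$ is periodic under $\bar f$ with period dividing $m$; minimality of $m$ forces the period to equal $m$, yielding $v_p(a_n) > 0 \iff m \mid n$. This settles the ``else'' case of the lemma.

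Finally, for $n = km$ with $k \geq 1$, I would induct on $k$. The key polynomial identity is that $f^m(y) - f^m(0)$ is divisible by $y^d$ in $\mathbb{Z}_p[y]$: one has $f(y) - f(0) = y^d$, and writing $f^{j+1}(y) - f^{j+1}(0) = (f^j(y))^d - (f^j(0))^d = (f^j(y) - f^j(0)) \cdot Q_j(y)$ via the standard factorization of $A^d - B^d$, the divisibility propagates inductively. Writing $f^m(y) = f^m(0) + y^d R(y)$ with $R \in \mathbb{Z}_p[y]$ and substituting $y = f^{km}(0)$, whose valuation is $e$ by the inductive hypothesis, the term $y^d R(y)$ has valuation at least $de > e = v_p(f^m(0))$, so the ultrametric gives $v_p(f^{(k+1)m}(0)) = e$. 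The only subtle point is recognizing and proving the polynomial identity; the rest is routine ultrametric bookkeeping.
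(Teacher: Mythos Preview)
Your proof is correct. The core mechanism is the same as the paper's: both arguments exploit that $f^n(z) - f^n(0)$ vanishes to order at least $2$ at $z=0$, so that once an iterate lands in $p\mathbb{Z}_p$, applying $f^k$ adds a term of strictly larger valuation to $f^k(0)$. The paper packages this as $f^n(z) = z\,g_n(z) + f^n(0)$ with $g_n(0) = (f^n)'(0) = 0$, which lets it treat the cases $k \mid n$ and $k \nmid n$ uniformly by writing $f^{qk+r}(0) = f^{qk}(0)\,g_r(f^{qk}(0)) + f^r(0)$; no case split on $m=1$ versus $m \geq 2$ is needed, and no passage to $\mathbb{F}_p$. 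Your organization instead isolates the $m=1$ case with a direct ultrametric computation, handles the ``else'' case for $m \geq 2$ via the period of $0$ under $\bar f$ on $\mathbb{F}_p$, and proves the sharper divisibility $y^d \mid f^m(y) - f^m(0)$ for the inductive step. Your route is slightly longer but makes the periodicity picture modulo $p$ explicit, which is conceptually pleasant; the paper's is terser and avoids the case split entirely.
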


\begin{proof} Write $k = k(p).$ Since $p$ divides $a_k$, $p$ does not divide $b$, so ord$_p(a_n) = $ ord$_p(f^n(0))$ for all $n \in \mathbb{N}$.  For $n \geq 1$, let $g_n(z)$ be the polynomial defined by $f^n(z) = zg_n(z) + f^n(0)$.  Note $g_n(0) = (f^n)'(0) = 0$ for all $n$.  \\

Suppose that $k \mid n$; write $n = mk$ with $m \in \mathbb{N}$.  We have:

$$f^{mk}(0) = f^{(m-1)k + k}(0) = f^{(m-1)k}(0) \cdot g_k(f^{(m-1)k}(0)) + f^k(0);$$
assuming inductively that ord$_p(f^{(m-1)k}(0)) = $ ord$_p(a_k)$, $g_k(0) = 0$ implies that ord$_p(f^n(0)) = $ ord$_p(f^k(0)) =$ ord$_p(a_k)$.  \\

Now suppose that $k$ does not divide $n$.  Write $n = qk+r$ with $0 < r < k,$ noting that by definition of $k$, ord$_p(f^r(0)) = 0$.  Then we have

$$f^n(0) = f^{qk+r}(0) = f^{qk}(0) g_r(f^{qk}(0)) + f^r(0);$$
since ord$_p(f^{qk}(0)) > 0,$ ord$_p(f^n(0)) = 0$ as desired.
\end{proof}

\begin{corollary} Suppose that $n \in \mathbb{N}$ such that $a_n$ has no primitive prime divisor, i.e. $n \in \mathcal{Z}(f,0)$.  Then 
$$a_n \mid \prod_q a_{\frac{n}{q}},$$
 where the product is taken over all distinct primes $q$ which divide $n$.
\end{corollary}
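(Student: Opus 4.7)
The plan is to argue prime-by-prime using Lemma \ref{RDS}. Fix a prime $p$ dividing $a_n$, and let $k = k(p)$ be the index defined in Lemma \ref{RDS}. Since $n \in \mathcal{Z}(f,0)$, by definition there is some $1 \leq k' < n$ with $p \mid a_{k'}$; but Lemma \ref{RDS} says $p \mid a_{k'}$ forces $k \mid k'$, so in particular $k \leq k' < n$, i.e.\ $k$ is a \emph{proper} divisor of $n$. Also $k \mid n$ (since $p \mid a_n$), so we may pick a prime $q \mid n$ with $q \mid n/k$; equivalently, $k \mid n/q$.

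Applying Lemma \ref{RDS} at both indices $n$ and $n/q$, we obtain
$$\mathrm{ord}_p(a_n) = \mathrm{ord}_p(a_k) = \mathrm{ord}_p(a_{n/q}).$$
In particular $\mathrm{ord}_p(a_n) \leq \mathrm{ord}_p\!\left(\prod_{q \mid n} a_{n/q}\right)$, where the product is over distinct prime divisors of $n$. Since this holds for every prime $p$ dividing $a_n$, and $a_n$ is determined by its $p$-adic valuations at the primes dividing it, the divisibility $a_n \mid \prod_q a_{n/q}$ follows.

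There is no real obstacle here: the entire content is packaged inside Lemma \ref{RDS}, which pins down $\mathrm{ord}_p(a_n)$ as a function of whether $k(p) \mid n$, together with the elementary observation that the Zsigmondy hypothesis forces $k(p)$ to be a proper divisor of $n$, hence to divide some $n/q$. The only small point to be careful about is ensuring that $k(p) \mid n/q$ for \emph{some} $q \mid n$ whenever $k(p)$ is a proper divisor of $n$, which is immediate: pick any prime $q$ dividing the nontrivial quotient $n/k(p)$.
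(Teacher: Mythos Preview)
Your argument is correct and follows essentially the same route as the paper's own proof: fix a prime $p \mid a_n$, use Lemma~\ref{RDS} to see that $k(p)$ is a proper divisor of $n$, choose a prime $q \mid n/k(p)$ so that $k(p) \mid n/q$, and conclude $\mathrm{ord}_p(a_n) = \mathrm{ord}_p(a_{n/q})$. The only cosmetic difference is that you spell out the intermediate step $k(p) \leq k' < n$ via an auxiliary index $k'$, whereas the paper just asserts $k(p) < n$ directly from the non-primitivity of $p$.
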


\begin{proof}  Suppose $p$ is a prime dividing $a_n$.  Let $k$ be minimal such that $p \mid a_{k}.$  Since $p$ is not a primitive divisor, $k < n,$ and by the lemma, $k$ divides $n$.  Thus $k$ divides $\frac{n}{q}$ for some prime $q$ dividing $n$; but by the lemma, ord$_p(a_n) = $ ord$_p(a_{k}) = $ ord$_p(a_{\frac{n}{q}})$.  Taking the product over all $p$ yields the corollary.
\end{proof}

Taking absolute values and logarithms, we immediately have the following inequality which will provide the starting point of all effective computations:

\begin{corollary} Suppose that $a_n$ has no primitive prime divisor.  Then 

\begin{equation} \label{eqn1} \log |a_n| \leq \sum_q \log |a_{\frac{n}{q}}|;
\end{equation}
consequently, 

\begin{equation} \label{eqn2} \log{|f^n(0)|} + d^{n-1} \log{b} \leq \sum_q (\log{|f^{\frac{n}{q}}(0)|} + d^{\frac{n}{q} - 1} \log{b}).
\end{equation}
where the sum is taken (without multiplicity) over the primes $q$ which divide $n$.  

\end{corollary}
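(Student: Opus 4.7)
The plan is to read off both inequalities directly from the preceding corollary, which gives the divisibility $a_n \mid \prod_q a_{n/q}$ in $\mathbb{Z}$, with $q$ ranging over the distinct primes dividing $n$. First I would invoke the opening observation to note that every $a_m$ is a nonzero integer (since the critical orbit is infinite and each $a_m$ is coprime to $b$). Divisibility in $\mathbb{Z}$ with nonzero target therefore upgrades to the absolute-value bound $|a_n| \leq \prod_q |a_{n/q}|$, and applying $\log$, which converts the product into a sum, yields equation (1) verbatim.

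For equation (2), I would substitute the normal form $f^n(0) = a_n / b^{d^{n-1}}$ from the opening observation, which rearranges to $|a_n| = b^{d^{n-1}} |f^n(0)|$ and hence $\log|a_n| = \log|f^n(0)| + d^{n-1} \log b$; the analogous identity holds for each index $n/q$ appearing in the sum on the right-hand side of (1). Plugging these two expressions into (1) and grouping the $f^m(0)$ and $\log b$ contributions on each side produces (2) exactly as stated. There is no genuine obstacle here: this corollary is purely a bookkeeping step that recasts the divisibility output of the preceding corollary into the additive form which will be combined with the rapid growth of $d^{n-1} \log b$ and Diophantine approximation estimates in Section \ref{Eff1}.
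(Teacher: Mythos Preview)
Your proposal is correct and matches the paper's approach exactly: the paper simply states that the corollary follows by ``taking absolute values and logarithms'' of the divisibility relation $a_n \mid \prod_q a_{n/q}$, which is precisely the argument you spell out. Your additional remark that each $a_m$ is nonzero (so that the logarithms make sense) is a helpful clarification the paper leaves implicit.
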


Because we seek to derive a contradiction from the above inequalities, it is convenient to treat the $n=2$ case separately.  By definition, 

$$f^2(0) = \frac{a^d}{b^d} + \frac{a}{b} = \frac{a^d + ab^{d-1}}{b^d} = \frac{a(a^{d-1} + b^{d-1})}{b^d}.$$
Thus $2 \in \mathcal{Z}(f,0)$ if and only if $a^{d-1} + b^{d-1} = \pm 1;$ since $b \geq 2$ and $a \ne 0$, this holds if and only if $d = 2$ and $a = - (b \pm 1).$  Therefore we conclude the following:

\begin{proposition} \label{N2} For any $c = \frac{a}{b} \in \mathbb{Q} \setminus \mathbb{Z}$, $2 \in \mathcal{Z}(f,0)$ if and only if $d = 2$ and $a = - (b \pm 1).$
\end{proposition}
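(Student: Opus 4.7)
The paragraph immediately preceding the statement already does most of the work: writing $f^2(0) = a(a^{d-1}+b^{d-1})/b^d$ and checking that the numerator and denominator are coprime (since any prime $p \mid b$ dividing $a^{d-1}+b^{d-1}$ would force $p \mid a$), we identify $a_2 = a(a^{d-1}+b^{d-1})$. My plan is to convert this into the two claimed directions.

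First, I would record that $a_1 = a$, so the Zsigmondy condition at $n=2$ is that every prime dividing $a_2$ already divides $a$. Since $\gcd(a, a^{d-1}+b^{d-1})$ divides $\gcd(a, b^{d-1}) = 1$, no prime of $a$ divides the second factor; thus every prime divisor of $a^{d-1}+b^{d-1}$ is automatically a primitive prime divisor of $a_2$. Consequently, $2 \in \mathcal{Z}(f,0)$ if and only if $a^{d-1}+b^{d-1}$ has no prime divisors, i.e.\ equals $\pm 1$.

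It remains to rule out $d \geq 3$. If $d$ is odd, then $d-1$ is even, so $a^{d-1} \geq 1$ and $a^{d-1}+b^{d-1} \geq 1 + 2^{d-1} \geq 5$, which is impossible. If $d$ is even with $d \geq 4$, then $d-1 \geq 3$ is odd, so $a^{d-1}$ has the sign of $a$; since the positive case again gives $a^{d-1}+b^{d-1} \geq 1+2^{d-1} > 1$, we may assume $a < 0$ and write $|a|^{d-1} - b^{d-1} = \pm 1$ (with $|a| \neq b$ because $\gcd(a,b)=1$ and $b \geq 2$). Factoring
\[
\bigl||a|^{d-1} - b^{d-1}\bigr| = \bigl||a|-b\bigr|\bigl(|a|^{d-2} + |a|^{d-3}b + \cdots + b^{d-2}\bigr),
\]
the second factor is a sum of $d-1 \geq 3$ positive integers and is therefore at least $3$, so the product cannot equal $1$. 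This contradicts $a^{d-1}+b^{d-1} = \pm 1$ and forces $d = 2$. Finally, when $d=2$ the equation reduces to $a+b = \pm 1$, i.e.\ $a = -(b \pm 1)$, establishing both directions.

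The only step requiring any thought is the elementary elimination of $d \geq 3$; no deep input is needed, just the coprimality of $a$ and $b$ together with the constraint $b \geq 2$.
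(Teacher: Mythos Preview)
Your argument is correct and follows exactly the route the paper takes in the paragraph preceding the proposition: identify $a_2 = a(a^{d-1}+b^{d-1})$, reduce $2 \in \mathcal{Z}(f,0)$ to $a^{d-1}+b^{d-1} = \pm 1$, and then use $b \geq 2$, $a \neq 0$ to force $d=2$. The only difference is that you spell out the elementary case analysis eliminating $d \geq 3$, which the paper leaves implicit.
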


\begin{remark} Note then that $c > 0$ implies $2 \notin \mathcal{Z}(f,0)$.  
\end{remark}

\ssection{Bounding $\#\mathcal{Z}(f, 0)$} \label{Eff1}

In this section we prove Theorem \ref{DAEff}, which provides a uniform bound on the size of the Zsigmondy set for any value of $c \in \mathbb{Q}.$  Since the result of Theorem \ref{DAEff} is superseded by that of Theorem \ref{NREff} if applicable, we will assume that the hypotheses of Theorem \ref{NREff} do not apply; namely, that we have $d$ even and $c \in (-2^{\frac{1}{d-1}}, -1)$ (see Section \ref{tight} for the proof of Theorem \ref{NREff}).  Our goal is to use inequality (\ref{eqn2}), for which we require both upper and lower bounds on $|f^n(0)|$.  Our assumption on $c$ yields a strong upper bound via induction: 

\begin{lemma} \label{bound} Suppose $d$ is even and $-2^{\frac{1}{d-1}} < c < -1$.  Then we have 

$$|f^n(0)| \leq |c|$$
for all $n \geq 1$.  

\end{lemma}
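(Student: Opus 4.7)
The plan is a straightforward induction on $n$, exploiting the fact that $d$ is even so the map $z \mapsto z^d$ takes values in $[0, \infty)$, together with the explicit arithmetic bound $|c|^{d-1} < 2$ encoded in the hypothesis $c > -2^{1/(d-1)}$.

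For the base case $n=1$, we have $f(0)=c$ so $|f(0)|=|c|$. For the inductive step, assume $|f^n(0)| \leq |c|$ and write
\[
f^{n+1}(0) = (f^n(0))^d + c.
\]
Since $d$ is even, $(f^n(0))^d \geq 0$, so $f^{n+1}(0) \geq c = -|c|$, giving the lower bound. For the upper bound, the inductive hypothesis yields $(f^n(0))^d \leq |c|^d$, hence
\[
f^{n+1}(0) \leq |c|^d + c = |c|\bigl(|c|^{d-1} - 1\bigr).
\]
The assumption $c > -2^{1/(d-1)}$ is equivalent to $|c|^{d-1} < 2$, so the right-hand side is strictly less than $|c|$. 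Combining the two bounds gives $-|c| \leq f^{n+1}(0) \leq |c|$, i.e., $|f^{n+1}(0)| \leq |c|$.

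There is no real obstacle here; the whole content of the lemma is the observation that the hypothesis on $c$ is precisely calibrated to make the interval $[c, -c]$ forward-invariant under $f$ when $d$ is even. The parity of $d$ is used only to guarantee non-negativity of $z^d$ (which is what gives the lower bound $f^{n+1}(0)\geq c$ for free); the odd-$d$ case would fail this step and is correctly excluded from the hypothesis.
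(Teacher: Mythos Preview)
Your proof is correct and is exactly the straightforward induction the paper has in mind; indeed, the paper does not spell out a proof at all, merely remarking that the bound follows ``via induction,'' which is precisely what you have written out.
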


Thus supposing that $n \in \mathcal{Z}(f,0)$, inequality (\ref{eqn2}) implies

$$\log{|f^n(0)|} + d^{n-1} \log{b} \leq \omega(n) \log{|c|} + \sum_q d^{\frac{n}{q}-1} \log{b},$$
where we define $\omega(n)$ to be the number of distinct prime factors of $n$.   Since a version of the above inequality will be used many times in this paper, it is worth noting that we have the following coarse bounds:
$$s_d(n) \leq d^{\frac{n}{2}} \log_2(n), \ \omega(n) \leq \log_2(n),$$
since each prime factor of $n$ is bounded below by 2. \\

To use this inequality to bound $n$, we require a lower bound on $|f^n(0)|$ that is reasonably better than $b^{-d^{n-1}}$.  

\ssubsection{$x^d+c$ irreducible over $\mathbb{Q}$}
{\bf Assumption.} Throughout this subsection, we assume that $d$ is even, $c \in \mathbb{Q}\cap(-2^{\frac{1}{d-1}}, -1)$, and for all $m \mid d$, $m > 1$, $c$ is not an $m$th power of a rational number. \\

Under this assumption, we achieve the following bound on the recurrence of the critical point:

\begin{sstheorem} \label{DABound} For each even $d \geq 2,$ there exist positive integers $1 \leq N_d \leq 6$ and $1 \leq m_d \leq 6$ such that there are at most $N_d$ values of $n \in \mathbb{N}$ satisfying both 

$$n \geq 2m_d + 6$$
and
$$|f^n(0)| \leq (b^{d^{n-2}})^{-d(1-d^{-m_d})}.$$

Further, for $d \geq 6,$ the result holds with $m_d = 1$ and $N_d = 2$, and for $d = 4$, the result holds with $m_d = 2$ and $N_d = 3$. 
\end{sstheorem}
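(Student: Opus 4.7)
The plan is to convert the hypothesized smallness of $|f^n(0)|$ into an exceptionally strong rational approximation to $\alpha := (-c)^{1/d}$, the real positive $d$th root of $-c$, and then to invoke Mahler's effective refinement \cite{Mahler:OTCF} of Thue's theorem on approximation of $d$th roots. Under the standing irreducibility assumption, $\alpha$ is algebraic of degree exactly $d$ over $\mathbb{Q}$. First, I would factor
$$f^n(0) \;=\; f^{n-1}(0)^d - \alpha^d \;=\; \bigl(f^{n-1}(0) - \alpha\bigr)\prod_{j=1}^{d-1}\bigl(f^{n-1}(0) - \zeta^j\alpha\bigr),$$
with $\zeta = e^{2\pi i/d}$. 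By Lemma \ref{bound} together with $|c| < 2^{1/(d-1)}$, the real iterate $f^{n-1}(0)$ lies in the compact interval $[-|c|, |c|]$, whereas each non-real conjugate $\zeta^j\alpha$ ($j \ne 0$) has imaginary part bounded below, so the modulus of each non-real factor is at least some $\delta(c,d) > 0$ independent of $n$. Writing $f^{n-1}(0) = p_n/q_n$ in lowest terms with $q_n = b^{d^{n-2}}$, the hypothesis $|f^n(0)| \le q_n^{-d(1-d^{-m_d})}$ then yields
$$\left|\alpha - \frac{p_n}{q_n}\right| \;\le\; C(c,d)\cdot q_n^{-d(1 - d^{-m_d})}.$$

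At this point I would apply Mahler's bound: there exist an effective $c_0 = c_0(c,d) > 0$ and an explicit exponent $\kappa_d < d$ (of order $d/2 + O(1)$ in Mahler's sharpening) such that $|\alpha - p/q| > c_0\, q^{-\kappa_d}$ for every $p/q \in \mathbb{Q}$ with $q$ large enough in an effective sense. I would choose $m_d$ minimal so that $d(1 - d^{-m_d}) > \kappa_d$ with margin enough to absorb the ratio $C(c,d)/c_0$; the two inequalities together pin $q_n$ below an effective threshold and hence bound $n$. For $d \ge 6$, the exponent $d - 1 = d(1 - 1/d)$ already exceeds $\kappa_d$ comfortably, so $m_d = 1$ works; for $d = 4$ one sits on the boundary and needs $m_d = 2$; the weakest case $d = 2$ requires up to $m_d = 6$. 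The count $N_d$ comes from the gap principle intrinsic to Mahler's method: once we exceed Mahler's borderline exponent, the number of very good rational approximations to $\alpha$ is bounded by a universal constant (governed by the number of Puiseux branches of the defining equation for $\alpha$), yielding $N_d \le 6$.

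The main obstacle I foresee is the uniform bookkeeping of constants. The factor $C(c,d)$ from the factorization behaves like $\delta(c,d)^{-(d-1)}$ and so degenerates as $c$ approaches the endpoints $-1$ or $-2^{1/(d-1)}$; Mahler's effective $c_0(c,d)$ carries its own dependence on the height of $\alpha$ (hence on $|c|$); and the sharper values of $\kappa_d$ needed for small $d$ must be tracked through the inequality chain without losing the clean numerical bounds $m_d, N_d \le 6$. The threshold $n \ge 2m_d + 6$ exists precisely to let the super-exponential growth of $q_n = b^{d^{n-2}}$ (with $b \ge 2$) dominate all of these implicit constants; verifying that the slack it provides is always sufficient, especially in the small-$d$ regime where Mahler's improvement is narrowest, is where the argument is most delicate.
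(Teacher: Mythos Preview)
Your reduction of small $|f^n(0)|$ to a strong rational approximation of $\alpha=(-c)^{1/d}$ is exactly the right first step, and agrees with the paper. The gap is in what you take ``Mahler's bound'' to be. You invoke an effective inequality of the form $|\alpha-p/q|>c_0\,q^{-\kappa_d}$ with an explicit $\kappa_d<d$ of size roughly $d/2+O(1)$, and then say this forces $q_n$ (hence $n$) below an effective threshold. That is not what Mahler supplies, and in fact no such effective irrationality exponent strictly below $d$ is available here; if it were, the theorem would yield an effective $M(c)$ directly and $N_d$ would be irrelevant. (Note too that with $\kappa_2\approx 2$ your argument is vacuous at $d=2$, since $\mu=d(1-d^{-m})<2$ for every $m$.)

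What Mahler's theorem (Theorem~\ref{Mahler}) actually gives is a two-solution repulsion principle: for any $\mu>\sqrt{d}$ and suitable $\epsilon>0$, one cannot have \emph{two} rationals $p_i/q_i$ with $|p_i/q_i-\alpha|<q_i^{-\mu}$ once $q_1^{\kappa}\ge(16R)^{4/\epsilon}$ and $q_2\ge q_1^{5d^2/(2\epsilon)}$. Because $q_n=b^{d^{n-2}}$ grows doubly exponentially, the first condition becomes $n_1\ge 2+\log_d(24/(\kappa\epsilon))$ and the second becomes $n_2\ge n_1+\log_d(5d^2/(2\epsilon))$. The proof then consists of choosing, for each $d$, an $\epsilon$ and an $m_d$ (so $\mu=d(1-d^{-m_d})>\sqrt{d}$) for which the threshold is at most $2m_d+6$; all good-approximation indices above that threshold are then confined to a single window of length $\log_d(5d^2/(2\epsilon))$, and since $|f^n(0)|<\tfrac12$ forces $|f^{n+1}(0)|>\tfrac12$ no two such indices are consecutive, giving $N_d\le\tfrac12\log_d(5d^2/(2\epsilon))$. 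The specific numerics $(m_d,N_d)=(1,2)$ for $d\ge6$, $(2,3)$ for $d=4$, and $(6,6)$ for $d=2$ come from carrying out this optimisation. Your worry about $c$-dependent constants degenerating near the endpoints is handled automatically: the paper uses only $R=|a|<2b$ and $b\ge 2$, so the threshold depends on $d$ alone.
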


For those $c$ satisfying the assumptions, Theorem \ref{DAEff} is an immediate consequence of Theorem \ref{DABound}:

\begin{proof} [Proof of Theorem \ref{DAEff}] If $n \in \mathcal{Z}(f,0)$, Lemma \ref{bound} and inequality (\ref{eqn2}) together imply

$$\log{|f^n(0)|} + d^{n-1} \log{b} \leq \frac{1}{d} s_d(n) \log{b} + \omega(n) \log{|c|},$$
where (as above)
$$s_d(n) := \sum_q d^{\frac{n}{q}}$$
is a sum over primes $q$ dividing $n$, and $\omega(n)$ is the number of distinct primes dividing $n$.   \\

Thus applying Theorem \ref{DABound}, for all but at most $N_d$ values of $n$ with $n \geq 2m_d+6$, we have 
$$d^{n-m_d-1} - \frac{1}{d} s_d(n) \leq \frac{\omega(n) \log{|c|}}{\log{b}}.$$
Since $\frac{a}{b} = c \in (-2, -1)$, we have $|c| < b$; also we have $\frac{1}{d} s_d(n) < d^{\frac{n}{2}}$.  Thus
$$d^{n-m_d-1} - d^{\frac{n}{2}} \leq \omega(n).$$
By assumption, $n - m_d - 1 \geq \frac{n}{2} + 2$, and so 
$$d^{\frac{n}{2}} \leq \omega(n);$$
since $\omega(n) \leq \log_2(n)$, this is false for all $d \geq 2, n \geq 2$.   \\

Therefore the size of the Zsigmondy set satisfies

$$\#\mathcal{Z}(f,0) \leq 2m_d+6 -1 + N_d \leq 23$$
for all values of $d \geq 2$, with improved bound for $d = 4$ of 
$$\#\mathcal{Z}(f,0) \leq 2m_d+6 -1 + N_d = 12,$$
and for $d \geq 6$ we have
$$\#\mathcal{Z}(f,0) \leq 2m_d+6 -1 + N_d = 9.$$
\end{proof}

The remainder of this section will be devoted to the proof of Theorem \ref{DABound}, which relies on the proof of Mahler's quantitative result \cite{Mahler:OTCF} on restricted rational approximation of real algebraic numbers.   Examining the proof of Theorem 3 of \cite{Mahler:OTCF}, we extract the following quantitative statement bounding the good rational approximates of real algebraic numbers:

\begin{sstheorem} \label{Mahler} [Mahler] Let $S$ be a finite set of primes, $\zeta$ a real algebraic number of degree $d \geq 2$, and $\mu > \sqrt{d}$.  Let $R$ be the maximal absolute value of the coefficients of the minimal integral polynomial of $\zeta$. Suppose $\epsilon > 0$ is sufficiently small so that 
$$\kappa := \left( \sqrt{\frac{1-2\epsilon}{d}} - 2\sqrt{\epsilon} \right) \mu - (1+\epsilon)^2 > 0.$$
Then there do not exist rational $S$-integers $\frac{p_1}{q_1}, \frac{p_2}{q_2}$ satisfying 
\begin{equation} \label{GoodApprox} \left| \frac{p_i}{q_i} - \zeta \right| < q_i^{-\mu},
\end{equation} 
which also satisfy
\begin{itemize}
\item $q_1^{\kappa} \geq (16R)^{\frac{4}{\epsilon}},$
\item $q_2 \geq q_1^{\frac{5d^2}{2\epsilon}}.$
\end{itemize}
\end{sstheorem}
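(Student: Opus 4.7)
The plan is to follow the classical Thue--Siegel--Mahler scheme, tracking all numerical constants explicitly. Suppose for contradiction that rational $S$-integers $p_1/q_1$ and $p_2/q_2$ satisfy (\ref{GoodApprox}) together with both size hypotheses. I would choose positive integer parameters $r_1, r_2$ so that $r_1 \log q_1 \approx r_2 \log q_2$ --- this balancing is the device by which Thue's two-approximation argument handles widely separated denominators --- and construct, via Siegel's lemma, a nonzero polynomial $P(x,y) \in \mathbb{Z}[x,y]$ of bidegree at most $(r_1, r_2)$, with coefficients bounded by $(cR)^{r_1+r_2}$, which vanishes at $(\zeta,\zeta)$ with weighted index at least $\theta := 2/d - O(\epsilon)$.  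Existence reduces to the lattice-point count $(2/d - O(\epsilon))^2/2 < 1$ for $d \geq 2$, and the explicit form of Siegel's lemma is where the factor $(16R)^{4/\epsilon}$ eventually originates.

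Next I would estimate $P(p_1/q_1, p_2/q_2)$ in two opposite directions. Taylor-expanding at $(\zeta, \zeta)$ and invoking (\ref{GoodApprox}) together with the vanishing to weighted index $\theta$ gives
$$\left| P\bigl( \tfrac{p_1}{q_1}, \tfrac{p_2}{q_2} \bigr) \right| \leq (cR)^{r_1+r_2} q_1^{-\mu \theta r_1},$$
where the balancing $r_1 \log q_1 = r_2 \log q_2$ has been absorbed. This is small because $\mu > \sqrt{d}$ forces $\mu \theta > 2/\sqrt{d}$, the critical gap in the argument. On the other hand, the $S$-integer condition lets me clear all $S$-primes from $q_1^{r_1} q_2^{r_2}$; if $P(p_1/q_1, p_2/q_2) \neq 0$, it is then at least $q_1^{-r_1} q_2^{-r_2}$. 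Comparing the two bounds rearranges to precisely $\kappa \leq 0$, a contradiction, provided that the coefficient size $(cR)^{r_1+r_2}$ is swamped by the $q_1^{\kappa}$ slack --- exactly the content of the hypothesis $q_1^{\kappa} \geq (16R)^{4/\epsilon}$.

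The remaining point is to guarantee that $P(p_1/q_1, p_2/q_2) \neq 0$. Following Mahler, whose two-approximation framework sidesteps Roth's lemma, I would descend to the partial derivative $P_{(i_1, i_2)}$ of least $(r_1, r_2)$-weighted index that is nonzero at $(p_1/q_1, p_2/q_2)$, and rerun the upper/lower comparison with the index decreased accordingly. The hypothesis $q_2 \geq q_1^{5d^2/(2\epsilon)}$ is exactly the denominator gap needed to ensure this descent terminates before the index falls below the critical value $2/d - O(\epsilon)$.

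The main obstacle is pure bookkeeping rather than any new idea: the constants $(16R)^{4/\epsilon}$ and $q_1^{5d^2/(2\epsilon)}$ emerge from a delicate simultaneous optimization of $(r_1, r_2, \epsilon)$ balancing the Siegel coefficient bound, the Taylor error, and the index-descent loss. My concrete plan is therefore to rework the proof of Theorem 3 of \cite{Mahler:OTCF} with $\epsilon$ carried as a free parameter throughout, and verify that his calculations produce the stated thresholds in this explicit quantitative form.
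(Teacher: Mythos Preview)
The paper does not supply its own proof of this statement; it is quoted as a theorem extracted by ``examining the proof of Theorem~3 of \cite{Mahler:OTCF}'' and then used as a black box. Your plan---to rework Mahler's argument with $\epsilon$ carried explicitly---is therefore precisely the task the paper leaves implicit, and the architecture you sketch (auxiliary polynomial via Siegel's lemma, Taylor upper bound versus arithmetic lower bound, index descent for non-vanishing) is the correct Thue--Siegel--Mahler scheme.

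One point in your outline needs correction. The weighted vanishing index $\theta$ at $(\zeta,\zeta)$ should be of order $\sqrt{1/d}$, not $2/d - O(\epsilon)$. Each vanishing condition at the algebraic point $(\zeta,\zeta)$ imposes $d$ linear constraints over $\mathbb{Q}$, so the Siegel-lemma feasibility count reads roughly $\tfrac{d}{2}\theta^2 < 1$, not $\theta^2/2 < 1$ as you wrote; you have dropped the factor $d = [\mathbb{Q}(\zeta):\mathbb{Q}]$. This is exactly why the exponent threshold in the statement is $\mu > \sqrt{d}$: the term $\sqrt{(1-2\epsilon)/d}$ in the definition of $\kappa$ is (up to $\epsilon$-slack) the achievable index, the subtracted $2\sqrt{\epsilon}$ is the loss from the non-vanishing descent, and the $(1+\epsilon)^2$ on the right collects the coefficient growth against the arithmetic lower bound. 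With the index corrected your comparison ``$\mu\theta > 2/\sqrt{d}$'' becomes the meaningful inequality $\mu \cdot \sqrt{(1-2\epsilon)/d} > (1+\epsilon)^2$, i.e.\ $\kappa > 0$, and the rest of your plan goes through.
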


To apply this theorem to our setting, let $\zeta$ be the positive $d$th root of $|c|$, and $\mu = d(1-d^{-m})$, with $m$ to be chosen later.  Since $\zeta > 1,$ we have

$$|\frac{|a_{n-1}|}{b^{d^{n-2}}} - \zeta| < |f^{n-1}(0)^d - |c|| = |f^n(0)|,$$
so if 
$$|f^n(0)| \leq (b^{d^{n-2}})^{-d(1-d^{-m})},$$
then $|f^{n-1}(0)|$ is a good approximate of $\zeta$ in the sense of inequality (\ref{GoodApprox}). \\

So we will apply Mahler's theorem to the iterates $f^{n-1}(0)$; to do so, we rewrite the last three conditions of Theorem \ref{Mahler} in our setting.  Suppose that $|f^{n_1-1}(0)|$ and $|f^{n_2-1}(0)|$ are both good approximates to $\zeta$; i.e., satisfy inequality (\ref{GoodApprox}).  Since the denominator of $|f^{n_i-1}(0)|$ is $q_i = b^{d^{n_i-2}},$ we have

$$q_1^{\kappa} \geq (16R)^{\frac{4}{\epsilon}} \Leftrightarrow d^{n_1-2} \log{b} \geq \frac{4}{\kappa\epsilon} \log{16R}.$$

Since $|c| \in (1, 2)$, we have $R = |a| < 2b$; also we have $b \geq 2,$ so 

$$n_1 \geq \log_d(\frac{24}{\kappa \epsilon}) + 2 \Rightarrow (b^{d^{n_1-2}})^{\kappa} \geq (b^6)^{\frac{4}{\epsilon}} \Rightarrow d^{n_1-2} \geq \frac{24}{\kappa \epsilon} \Rightarrow q_1^{\kappa} \geq (16R)^{\frac{4}{\epsilon}}.$$

Similarly, we have 

$$n_2 \geq n_1 + \log_d(\frac{5d^2}{2\epsilon})  \Rightarrow q_2 \geq q_1^{\frac{5d^2}{2\epsilon}}.$$

Therefore we have shown that Theorem \ref{Mahler} implies the following:

\begin{ssproposition} \label{Approx} Suppose that $|f^{n_1-1}(0)|$ and $|f^{n_2-1}(0)|$ satisfy inequality (\ref{GoodApprox}), with $n_1 \geq \log_d(\frac{24}{\kappa \epsilon}) + 2.$ Then we have 

$$n_2 < n_1 + \log_d(\frac{5d^2}{2\epsilon}).$$
\end{ssproposition}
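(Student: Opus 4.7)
The plan is to assemble the translation already begun in the discussion above into a clean application of Mahler's theorem (Theorem \ref{Mahler}) via its contrapositive. In essence, Proposition \ref{Approx} is nothing more than a restatement of Mahler in the dynamical language of the sequence $\{a_n\}$, so the proof is organizational rather than computational.

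First, I would set up the identifications. Let $\zeta = |c|^{1/d}$, let $S$ be the (finite) set of prime divisors of $b$, and for $i = 1, 2$ write $p_i/q_i = |a_{n_i-1}|/b^{d^{n_i-2}}$, which is a rational $S$-integer in lowest terms by the Observation. The hypothesis that $|f^{n_i-1}(0)|$ satisfies (\ref{GoodApprox}) for the exponent $\mu = d(1-d^{-m})$ is exactly the assumption needed to feed $p_1/q_1$ and $p_2/q_2$ into Theorem \ref{Mahler}. Note that since $\zeta$ is a real algebraic number of degree $d$ (using the irreducibility assumption of this subsection), the minimal integral polynomial is $bx^d - a$, so $R = \max(|a|,b) = |a| < 2b$ since $|c| < 2$.

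Next, I would verify the two size conditions of Theorem \ref{Mahler} in the form the preceding text derives. The calculation displayed above shows that $n_1 \geq \log_d(24/(\kappa\epsilon)) + 2$ forces $d^{n_1-2} \geq 24/(\kappa\epsilon)$, which combined with $b \geq 2$ and $R < 2b$ yields $q_1^\kappa \geq (16R)^{4/\epsilon}$. This is exactly the first hypothesis, so it is satisfied by assumption.

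Finally, I would argue by contradiction: suppose $n_2 \geq n_1 + \log_d(5d^2/(2\epsilon))$. Then $d^{n_2-2} \geq d^{n_1-2} \cdot 5d^2/(2\epsilon)$, so $q_2 = b^{d^{n_2-2}} \geq (b^{d^{n_1-2}})^{5d^2/(2\epsilon)} = q_1^{5d^2/(2\epsilon)}$, which is the second hypothesis of Theorem \ref{Mahler}. But then Mahler's theorem would forbid the simultaneous existence of the two good approximates $p_1/q_1$ and $p_2/q_2$, contradicting our hypothesis. Hence $n_2 < n_1 + \log_d(5d^2/(2\epsilon))$, as claimed. The only substantive step is confirming $R < 2b$ and tracing the $\log_d$ arithmetic; the rest is bookkeeping, and the main obstacle (the deep Diophantine input) is entirely outsourced to Theorem \ref{Mahler}.
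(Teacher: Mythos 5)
Your proof is correct and is essentially the paper's own argument: the paper embeds the proof of Proposition~\ref{Approx} in the two implication chains displayed immediately before it, showing that the hypothesis on $n_1$ forces $q_1^{\kappa}\geq(16R)^{4/\epsilon}$ and that $n_2\geq n_1+\log_d(5d^2/(2\epsilon))$ would force $q_2\geq q_1^{5d^2/(2\epsilon)}$, at which point Theorem~\ref{Mahler} rules out the pair of good approximates. You have simply made the contrapositive explicit and double-checked the identification $R=|a|<2b$; there is no substantive difference.
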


\begin{proof} [Proof of Theorem \ref{DABound}] According to Proposition \ref{Approx}, in order to prove Theorem \ref{DABound}, we must show that we can choose $\epsilon$ and $\mu = d(1-d^{-m_d})$ such that $\kappa > 0$, and 
\begin{itemize}
\item $1 \leq m_d \leq 6$,
\item $2m_d+6 \geq \log_d(\frac{24}{\kappa \epsilon}) +2$, and 
\item $\log_d(\frac{5d^2}{2\epsilon}) \leq 6.$
\end{itemize}
\begin{ssremark} \label{NonConsecutive} In fact, we can weaken this last inequality; since $|f^n(0)| < \frac{1}{2} \Rightarrow |f^{n+1}(0)| > \frac{1}{2}$, we cannot have consecutive good approximates, and so we have $N_d \leq \frac{1}{2}\log_d(\frac{5d^2}{2\epsilon})$. 
\end{ssremark}

Suppose $d \geq 6$.  Let $m_d = 1$ and $\epsilon = \frac{1}{d^3}$ (note $\mu = d-1$).  Then one can compute that $\kappa > \frac{24}{d^3} > 0$, and therefore 

$$\log_d(\frac{24}{\kappa \epsilon}) < 6.$$
Therefore
$$2m_d+6 = 8 \geq  \log_d(\frac{24}{\kappa \epsilon}) +2.$$

By choice of $\epsilon,$ we have

$$\log_d(\frac{5d^2}{2\epsilon}) = 5 + \log_d(\frac{5}{2}) < 6,$$ 

and by Remark \ref{NonConsecutive}, we conclude that $N_d \leq \frac{1}{2}\log_d(\frac{5d^2}{2\epsilon}) < 3$.  \\

For $d=2$, we simply note that the smallest $m_2$ and $N_2$ that can be achieved are found when $\epsilon = .004$ and $m_2 = 6$.  In this case we have
$$2m_2 + 6 = 18 \geq \log_2(\frac{24}{\kappa \epsilon}) + 2,$$
and
$$\log_2(\frac{5d^2}{2\epsilon}) = \log_2(15000) < 14,$$
so we can take $N_2 = 6.$ \\

Similarly for $d=4,$ we achieve optimal values at $m_4 =2$ and $\epsilon = \frac{1}{128}$.  In this case we have 
$$2m_4 + 6 = 10 \geq \log_4(\frac{24}{\kappa \epsilon})+2,$$
and
$$\log_2(\frac{5d^2}{2\epsilon}) = \frac{11}{2} + \log_4(\frac{5}{2}) < 7,$$
so we can take $N_4 = 3.$

\end{proof}

\begin{ssremark} In Mahler's proof, the goal was to achieve the result for the most general case.  In our situation, the simplicity of the minimal polynomial yields slightly stronger results if we tighten the Diophantine approximation by hand.  In particular, one can show that there is at most one $n \in \mathcal{Z}(f,0)$ with $n \geq 7$.  However, the proof is a lengthy and relatively unenlightening computation, so we choose to use Mahler's result, at the expense of the bound on the size of $\mathcal{Z}(f,0)$; see \cite{Thesis} for this computation.
\end{ssremark}

\ssubsection{$x^d + c$ reducible over $\mathbb{Q}$}

In the case when $|c|^{\frac{1}{d}}$ has degree less than $d$ over $\mathbb{Q}$ we have a stronger result: 

\begin{ssproposition} Suppose that $d$ is even, and $c = \frac{a}{b} \in (-2^{\frac{1}{d-1}}, -1)$ such that there exists $m \mid d$, $m \ne 1$, and positive integers $k, l$ with $a = - k^m, b = l^m$.  Then $\mathcal{Z}(f,0) = \emptyset.$
\end{ssproposition}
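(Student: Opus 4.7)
The plan is to handle $n = 2$ via Proposition \ref{N2} and, for $n \geq 3$, to exploit the algebraic factorization of $a_n$ granted by $m \mid d$ in order to contradict the Zsigmondy upper bound $|a_n| \leq \prod_{q \mid n} |a_{n/q}|$. The case $n = 2$ is immediate: Proposition \ref{N2} would force $d = 2$, hence $m = 2$, and substituting $a = -k^2$, $b = l^2$ into $a = -(b \pm 1)$ gives $(k-l)(k+l) = \mp 1$, which has no positive integer solution since $k + l \geq 2$.

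For $n \geq 3$, write $d = me$ and set $A := a_{n-1}^e$, $B := k\, l^{d^{n-1} - 1}$. The recursion $a_n = a_{n-1}^d - k^m l^{m(d^{n-1}-1)}$ factors as $a_n = A^m - B^m$. I would first observe that $|A| \neq B$: otherwise $|f^{n-1}(0)| = (k/l)^{1/e}$ would equal the positive $d$-th root $\zeta$ of $|c|$, giving $f^n(0) = \zeta^d + c = 0$ and contradicting the infinite-orbit hypothesis. When $A \geq 0$ or $m$ is even, the factorization $|A|^m - B^m = (|A| - B)(|A|^{m-1} + \cdots + B^{m-1})$, combined with $\bigl| |A| - B \bigr| \geq 1$ and the cofactor being at least $B^{m-1}$, yields $|a_n| \geq B^{m-1}$; when $A < 0$ and $m$ is odd, $|a_n| = |A|^m + B^m \geq B^{m-1}$ directly. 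Together with $k \geq l$ (from $|c| \geq 1$), this produces the lower bound $|a_n| \geq l^{(m-1) d^{n-1}}$.

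On the other hand, $n \in \mathcal{Z}(f, 0)$ combined with Lemma \ref{bound} gives $|a_n| \leq |c|^{\omega(n)} l^{s_d(n)/e}$ (using $m/d = 1/e$). Since $|c| < 2 \leq l$, the two bounds collapse to the purely arithmetic inequality
\[
  (m - 1)\, d^{n - 1} \;\leq\; s_d(n)/e + \omega(n),
\]
which I would then show fails for every $n \geq 3$. The main obstacle is this final step: the inequality is nearly tight at $(n, d, m) = (3, 2, 2)$, where the LHS is $4$ and the RHS $3$, so the paper's generic bound $s_d(n) \leq d^{n/2} \log_2 n$ is not sharp enough. I would instead use the exact formula $s_d(n) = \sum_{q \mid n} d^{n/q}$ for the small cases (noting that $\omega(n) = 1$ and $s_d(n) = d^{n/p}$ when $n = p^k$), and rely on the doubly-exponential dominance of $(m-1) d^{n-1}$ over $\omega(n)\, d^{n/2}/e$ to close out $n$ with $\omega(n) \geq 2$.
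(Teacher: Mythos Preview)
Your argument is correct and in fact slightly cleaner than the paper's. Both proofs exploit the same algebraic fact---that the $m$th-power structure of $c$ forces a nontrivial factorization---but the paper routes this through a lower bound on $|f^n(0)|$: it writes $|f^n(0)| \geq \bigl||f^{n-1}(0)|^{d/m} - k/l\bigr|$, then observes that this is a nonzero rational of denominator at most $b^{(d/m)d^{n-2}}$, yielding $|f^n(0)| \geq 1/(d\,b^{d^{n-1}/2})$; this is plugged into inequality~(\ref{eqn2}) and combined with $\log b \geq \log 9$ to reach a contradictory arithmetic inequality. You instead stay in $\mathbb{Z}$, factoring $a_n = A^m - B^m$ directly and using $\bigl||A|-B\bigr|\geq 1$ to get $|a_n| \geq B^{m-1} \geq l^{(m-1)d^{n-1}}$. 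Your bound is sharper (no $1/d$ loss, and exponent $(m-1)d^{n-1}$ rather than $(m/2)d^{n-1}$ when $m>2$), which lets you finish with only $l \geq 2$ rather than the paper's $b \geq 9$.

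The one place you should tighten: the closing arithmetic inequality $(m-1)d^{n-1} > s_d(n)/e + \omega(n)$ deserves a complete verification, not just an outline. For prime-power $n = p^k \geq 3$ you have $n-1 \geq n/p + 1$, whence $d^{n-1} \geq d\cdot d^{n/p} \geq d^{n/p} + 2 > d^{n/p}/e + 1$, which handles $\omega(n)=1$. For $\omega(n)\geq 2$ (so $n\geq 6$) one checks that $s_d(n) < 2d^{n/2}$ and then $d^{n-1} \geq d^{n/2+2} \geq 2d^{n/2} + \log_2 n$ suffices; this is a two-line computation, but you should write it out rather than invoke ``doubly-exponential dominance.''
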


In order to prove the proposition, we find a lower bound for $|f^n(0)|$:

\begin{sslemma} Suppose $d$ and $c$ are as above.  Then we have 

$$|f^n(0)| \geq \frac{1}{db^{\frac{1}{2}d^{n-1}}}$$
for all $n \geq 2.$
\end{sslemma}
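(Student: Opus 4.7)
The plan is to exploit the algebraic factorization of $a_n$ that comes from the hypothesis that both $b$ and $|a|$ are perfect $m$th powers. Writing $d = me$ with $e \geq 1$, the recursion $a_n = a_{n-1}^d + a b^{d^{n-1}-1}$ becomes
\[
a_n \;=\; \bigl(a_{n-1}^e\bigr)^m - \bigl(k\, l^{d^{n-1}-1}\bigr)^m.
\]
Setting $A = a_{n-1}^e$ and $B = k l^{d^{n-1}-1}$, both are integers with $B > 0$ and $A \neq 0$ (the observation gives $a_{n-1} \neq 0$), and $A^m \neq B^m$ since otherwise $a_n = 0$. The whole problem thus reduces to bounding $|A^m - B^m|$ from below.

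The key step is the bound $|a_n| \geq B^{m-1}$. Using the identity $X^m - Y^m = (X - Y) \sum_{i=0}^{m-1} X^{m-1-i} Y^i$, I split on the sign of $A$ and the parity of $m$. When $A > 0$, $A$ and $B$ are positive: $|A - B| \geq 1$ as a nonzero integer, while the second factor is a sum of $m$ positive terms containing $B^{m-1}$. When $A < 0$ and $m$ is even, write $A^m - B^m = |A|^m - B^m$ and apply the same factorization in $|A|, B$; the second factor again exceeds $B^{m-1}$, and $|A| \neq B$ (since $A = -B$ would force $a_n = 0$). When $A < 0$ and $m$ is odd, $a_n = -|A|^m - B^m$ immediately gives $|a_n| \geq B^m \geq B^{m-1}$.

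The stated lower bound on $|f^n(0)| = |a_n|/b^{d^{n-1}}$ then reduces, after substituting $B^{m-1} = k^{m-1} l^{(m-1)(d^{n-1}-1)}$ and $b = l^m$, to
\[
k^{m-1}\, d \;\geq\; l^{\,d^{n-1}(1 - m/2) + (m-1)}.
\]
For $m = 2$ the exponent on the right equals $1$, so the inequality reads $kd \geq l$, which follows from $k > l$ (itself forced by $|c| > 1$) and $d \geq 2$. For $m \geq 3$ we have $1 - m/2 < 0$, so using $d^{n-1} \geq 2$ the exponent is at most $m - 2$, and $k^{m-1} \geq l^{m-1} > l^{m-2}$ together with $l \geq 2$ finishes the argument. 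The main obstacle is the case analysis in the second paragraph, in particular isolating the degenerate configurations $A = \pm B$ that collapse the factorization; these are ruled out by $a_n \neq 0$. Once $|a_n| \geq B^{m-1}$ is in hand, the rest is elementary exponent bookkeeping.
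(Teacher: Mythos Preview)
Your proof is correct, and it takes a genuinely different route from the paper's argument.

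The paper works analytically with the real quantity $|f^n(0)|$. It factors $|f^{n-1}(0)|^d - (k/l)^m$ first as a difference of $m$th powers of real numbers, drops the positive cofactor (using $(k/l)^{m-1}>1$), then introduces the algebraic number $\beta = |c|^{1/d}$ and factors again to isolate $\bigl||f^{n-1}(0)|-\beta\bigr|$. The lower bound then comes from a Liouville-type step: the rational number $|f^{n-1}(0)|^{d/m}-k/l$ is nonzero with denominator dividing $b^{(d/m)d^{n-2}}$, so its absolute value is at least $b^{-(d/m)d^{n-2}}\ge b^{-\frac12 d^{n-1}}$. Finally Lemma~\ref{bound} is invoked to control the size of the cofactor, producing the extra factor $d$ in the denominator.

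Your argument stays entirely in the integers. You factor $a_n=A^m-B^m$ with $A=a_{n-1}^{d/m}$ and $B=kl^{d^{n-1}-1}$, and obtain $|a_n|\ge B^{m-1}$ directly from $|A-B|\ge 1$ together with the trivial lower bound $B^{m-1}$ on the symmetric cofactor (the sign/parity case split you give is clean and complete, and the degenerate cases $A=\pm B$ are correctly excluded via $a_n\ne 0$). From there the bound on $|f^n(0)|$ is pure exponent arithmetic; your bound ``exponent $\le m-2$'' for $m\ge 3$ is a bit generous (in fact the exponent is at most $1$ once $d^{n-1}\ge 2$), but it suffices.

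What each approach buys: your integer factorization avoids any appeal to Lemma~\ref{bound}, avoids introducing $\beta$, and is self-contained; it is the more elementary proof. The paper's version makes the Liouville/Diophantine-approximation structure explicit, mirroring the Mahler argument used in the irreducible case, which is thematically pleasant but not logically necessary here.
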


\begin{proof} By assumption, we have 

\begin{eqnarray*}|f^n(0)| &=& |f^{n-1}(0)^d - \left(\frac{k}{l}\right)^m| \\
&=& | |f^{n-1}(0)|^{\frac{d}{m}} - \frac{k}{l}| \cdot |\left(|f^{n-1}(0)|^{\frac{d}{m}}\right)^{m-1} + \left(|f^{n-1}(0)|^{\frac{d}{m}}\right)^{m-2} \cdot \left( \frac{k}{l} \right)+ \cdots + \left( \frac{k}{l} \right)^{m-1}| \\
&\geq& | |f^{n-1}(0)|^{\frac{d}{m}} - \frac{k}{l}|,
\end{eqnarray*}
since the right-hand factor is a sum of positive numbers, one of which is $\left( \frac{k}{l} \right)^{m-1},$ which is $>1$ by assumption.  \\

Write $\beta$ for the positive $\frac{d}{m}$th root of $\frac{k}{l}$ - for notational convenience we will set $r = \frac{d}{m}$, so that $\beta^r = \frac{k}{l}$.  From the above, we have

\begin{eqnarray*} |f^n(0)| &\geq&  | |f^{n-1}(0)|^{\frac{d}{m}} - \frac{k}{l}| \\
&=& | |f^{n-1}(0)| - \beta| \cdot | |f^{n-1}(0)|^{\frac{d}{m}-1} + |f^{n-1}(0)|^{\frac{d}{m}-2} \cdot \beta + \cdots + \beta^{\frac{d}{m} - 1} | \\
&>& | |f^{n-1}(0)| - \beta|,
\end{eqnarray*}
since $\beta > 1$.  But we know that $|f^{n-1}(0)|$ is a rational number whose denominator is $b^{d^{n-2}}$ and therefore a power of $l$.  Therefore we have 

\begin{eqnarray*} \frac{1}{b^{r \cdot d^{n-2}}} &\leq& |(\frac{|a_{n-1}|}{b^{d^{n-2}}})^r - \frac{k}{l}| \\
&=& ||f^{n-1}(0) - \beta| \cdot | |f^{n-1}(0)|^{r-1} + ... + \beta^{r-1}| \\ &\leq& ||f^{n-1}(0)| - \beta| \cdot r \max \{ |f^{n-1}(0)|, \beta \}, \\
\end{eqnarray*}
noting that the first inequality is valid because the right-hand term divides $f^n(0)$ and thus cannot be 0, since 0 is not periodic.  By Lemma \ref{bound}, we then have
\begin{eqnarray*} \frac{1}{b^{r \cdot d^{n-2}}} &\leq& | |f^{n-1}(0)| - \beta | \cdot \frac{d}{2} \cdot |c| \\
&\leq&  | |f^{n-1}(0)| - \beta | \cdot d;
\end{eqnarray*}
Since $r = \frac{d}{m} \leq \frac{d}{2},$ we conclude that 

$$|f^n(0)| > | |f^{n-1}(0)| - \beta | \geq \frac{1}{d \cdot b^{r \cdot d^{n-2}}} \geq  \frac{1}{d \cdot b^{\frac{1}{2} d^{n-1}}},$$
as desired. 
\end{proof}

Having achieved a lower bound for $|f^n(0)|$, we can now prove the proposition.

\begin{proof} Suppose $n \geq 3$ with $n \in \mathcal{Z}(f,0)$, so that 

$$\log{|f^n(0)|} + d^{n-1} \log{b} \leq \omega(n) \log{|c|} + \sum_q d^{\frac{n}{q}-1} \log{b}.$$
By the lemma, we then have

$$d \log \left( \frac{1}{d \cdot b^{\frac{1}{2} d^{n-1}}} \right)+ d^n \log{b} < d \omega(n) \log{|c|} + s_d(n) \log{b},$$
and so 

$$- d \log{d} + \frac{1}{2} d^n \log{b} < d \omega(n) \log{|c|} + s_d(n) \log{b}.$$
Since $|c| < 2,$

$$\frac{1}{2} d^n - s_d(n) < \frac{d \omega(n) \log{2} + d \log{d}}{\log{b}}.$$
Since $c$ is an $m$th power of a rational number, $m > 1,$ we have $b \geq 9$, so 

$$\frac{1}{2} d^n - s_d(n) < \frac{d}{3} \omega(n) + \frac{1}{2} d \log{d},$$
and so 

$$\frac{1}{2} d^{n-1} - \frac{1}{2}\log{d} - \frac{1}{d} s_d(n) \leq \frac{1}{3} \omega(n).$$
Utilizing the bounds $s_d(n) \leq d^{\frac{n}{2}} \log_2(n)$ and $\omega(n) \leq \log_2(n),$ we see that this is false for all $d \geq 2$ and all $n \geq 3$.   \\

Since from Proposition \ref{N2} we know that $2 \in \mathcal{Z}(f,0)$ only if $d =2$ and $a = - (b \pm 1)$, our assumption that $c$ is an $m$th power guarantees that $2 \notin \mathcal{Z}(f,0)$, and the proposition is proved.

\end{proof}

\ssection{Existence of an Effective Bound $M(c)$} \label{Eff2}

Theorem \ref{NREff} (see Section \ref{tight}) guarantees a maximal element of 2 in the Zsigmondy set except in the possibly recurrent case of $d$ even and $c \in (-2^{\frac{1}{d-1}}, -1).$  However, it is possible regardless of choice of $c$ to use effective Diophantine approximation to bound the maximal element of the Zsigmondy set.  In this section, we prove Theorem \ref{EffExists} using an improvement of Schinzel's result \cite{Schinzel:OTTO} on approximation of quadratic irrationals due to Bennett and Bugeaud \cite{BennettBugeaud:ERFR}:

\begin{theorem} \label{BB} [Theorem 1.2 of \cite{BennettBugeaud:ERFR}] Let $||x||$ denote the distance from $x$ to the nearest integer.
For every integer $b \geq 2$ and every quadratic real number $\xi$, there exist positive effectively computable constants $\epsilon(\xi, b)$ and $\tau(\xi, b)$ such that for all $n \geq 1,$

$$||b^n \xi|| > \frac{\epsilon(\xi, b)}{b^{-(1-\tau(\xi, b))n}}.$$
\end{theorem}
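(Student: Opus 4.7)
The plan is to establish Theorem \ref{BB} as an effective refinement of Liouville's inequality for the quadratic $\xi$, specialized to rational approximants whose denominator is a power of $b$. The condition $||b^n\xi|| < \delta$ is equivalent to the existence of an integer $k$ with $|\xi - k/b^n| < \delta/b^n$, so the conclusion is precisely an effective irrationality measure of $2 - \tau(\xi,b)$ for $\xi$ restricted to denominators of the form $b^n$. Writing $\xi = (u + v\sqrt{D})/w$ with $D > 0$ squarefree and $v \neq 0$, it suffices to establish the analogous effective measure for $\sqrt{D}$.

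The cleanest route, followed by Bennett and Bugeaud, is the hypergeometric/Pad\'e method. Fix an integer $r$ with $r^2$ as close to $D$ as possible, and write $\sqrt{D} = r(1-z_0)^{1/2}$ with $z_0 = 1 - D/r^2$ of small archimedean absolute value. Construct the diagonal Pad\'e approximants $[m/m]$ to $(1-z)^{1/2}$ and evaluate at $z_0$: this produces integers $P_m, Q_m$ satisfying $|Q_m \sqrt{D} - P_m| \leq C_1 \lambda^{-m}$ and $|Q_m| \leq C_2 \mu^m$ for effective constants $\lambda > \mu > 1$ depending on $D$ and $r$. The crucial arithmetic input is that the common denominator of the Pad\'e coefficients of $(1-z)^{1/2}$ grows strictly slower than the naive $\binom{2m}{m}$ bound, thanks to a Chudnovsky-type $p$-adic cancellation phenomenon; this is what yields $\lambda > \mu$, equivalently an effective irrationality exponent for $\sqrt{D}$ strictly less than $2$.

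Next, I combine these approximants with a hypothetical super-approximant $p/b^n$ in a Thue-type gap argument: nonvanishing of the integer $pQ_m - P_m b^n$ forces $|pQ_m - P_m b^n| \geq 1$, while expanding
$$pQ_m - P_m b^n = Q_m(p - b^n \xi) + b^n(Q_m \xi - P_m)$$
and bounding each piece via the two approximation qualities (with $m$ chosen as a function of $n$ so that $Q_m \approx b^{\tau n}$) yields a contradiction whenever $\tau < 1 - (\log \mu)/(\log \lambda)$. This produces the desired effective $\tau(\xi, b) > 0$; tracking constants through the argument gives $\epsilon(\xi, b)$, and the finitely many small $n$ are handled by direct inspection.

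The main obstacle is the effective arithmetic input underlying the Pad\'e construction: producing explicit $\lambda$ and $\mu$ with $\lambda > \mu$ requires controlling the common denominator of the truncated expansion of $(1-z)^{1/2}$ below the naive $\binom{2m}{m}$ bound, via a Chudnovsky-style identity together with careful $p$-adic valuation bounds for the half-integral binomial coefficients $\binom{1/2}{k}$. Once that ratio is in hand, the Thue gap argument is classical; what makes the quadratic case effectively tractable is precisely that these Pad\'e approximants admit closed-form expressions.
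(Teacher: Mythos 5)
This theorem is cited from Bennett--Bugeaud rather than proved in the paper, so the right comparison is against what that reference actually does. Your proposal attributes the proof to the hypergeometric/Pad\'e method, but this is not what Bennett and Bugeaud use for this general statement, and in fact it cannot work at this level of generality. The paper itself records in the remark following the proof of Theorem \ref{EffExists} that the constant $\tau(\xi,b)$ ``is a consequence of an effective linear forms in logarithms bound'' and ``is on the order of the reciprocal of the logarithm of the fundamental unit of $\mathbb{Q}(\sqrt{c})$.'' That dependence on the regulator is the unmistakable signature of Baker's method: one shows that a good approximation $|\xi - k/b^n|$ with small numerator forces a near-solution of a Pell-type equation, so the approximant sits close to $(x_0+y_0\sqrt{D})\varepsilon^m$ for the fundamental unit $\varepsilon$; comparing denominators yields a linear form $m\log\varepsilon - n\log b + (\text{const})$ that is tiny, and an effective lower bound for linear forms in two logarithms gives the positive $\tau$. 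The restriction to denominators of the form $b^n$ is essential to this reduction and is exactly what makes the problem amenable to linear forms in logarithms.

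The hypergeometric route you sketch has a genuine gap: diagonal Pad\'e approximants to $(1-z)^{1/2}$, even with Chudnovsky-type control of the common denominators of the coefficients $\binom{1/2}{k}$, produce a pair $\lambda>\mu$ (equivalently, an effective irrationality exponent strictly below $2$) only when $|z_0|=|1-D/r^2|$ is small enough relative to the denominator growth, which fails for generic $D$. The method is useful only for a thin family of $\sqrt{D}$ close to a perfect square; for a typical quadratic $\xi$ it yields an exponent worse than Liouville's $2$ and the gap argument never closes, regardless of how $m$ is coupled to $n$. So your Step 3 (the Thue-type comparison) is fine as mechanics but is fed approximants that do not exist for most $\xi$. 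In short: the proposal proves a restricted special case by a different method, whereas the cited theorem holds for every quadratic real $\xi$ and every base $b$ precisely because the proof goes through the unit group and Baker's bound rather than through explicit rational approximants.
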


\begin{proof} [Proof of Theorem \ref{EffExists}] We may by Theorem \ref{NREff} simplify our argument by assuming $c = \frac{a}{b} \in (-2^{\frac{1}{d-1}}, -1)$ and $d$ even, so write $c = - \xi^2$, choosing positive square root $\xi$.  For notational convenience, we denote the constants of Theorem \ref{BB} by $\epsilon$ and $\tau$ respectively, so that for all $n \geq 1$, we have 

$$||b^n \xi|| > \frac{\epsilon}{b^{-(1-\tau))n}}.$$

Then for any $n \geq 1,$ we have the following lower bound for $|f^n(0)|$:

\begin{eqnarray*} |f^n(0)| &>& \left| \left( \frac{|a_{n-1}|}{b^{d^{n-2}}} \right)^{\frac{d}{2}}- \xi \right| \\
&=& b^{-\frac{1}{2}d^{n-1}} \left| |a_{n-1}| - \xi b^{-\frac{1}{2}d^{n-1}} \right| \\
&\geq& b^{-\frac{1}{2}d^{n-1}} ||\xi b^{\frac{1}{2}d^{n-1}}|| \\
&\geq& b^{-\frac{1}{2}d^{n-1}} \frac{\epsilon}{b^{\frac{1}{2}(1-\tau)d^{n-1}}} \\
&=& \frac{\epsilon}{b^{d^{n-1} - \frac{\tau}{2} d^{n-1}}}
\end{eqnarray*}

Suppose now that $n \notin \mathcal{Z}(f,0)$, so that by inequality (\ref{eqn1}) and Lemma \ref{bound}, we have 

$$d \log{|f^n(0)|} + d^n \log{b} \leq s_d(n) \log{b} + d \omega(n) \log{|c|},$$

where as before $s_d(n) := \sum_q d^{\frac{n}{q}}$  and $\omega(n) := \sum_q 1$ are sums over the distinct prime factors $q$ of $n$.  Our lower bound on $|f^n(0)|$ then implies

$$\log{\epsilon} + \frac{\tau}{2} d^{n-1} \log{b} \leq \frac{1}{d} s_d(n) \log{b} + \omega(n) \log{|c|},$$
so 
$$(\frac{\tau}{2} d^{n-1} -\frac{1}{d} s_d(n)) \log{b} \leq \log{\frac{1}{\epsilon}} + \omega(n) \log{|c|}.$$
Since $\tau$ is a constant, the left-hand side growth will be exponential in $n$ for $n$ sufficiently large, while the right-hand side is $\mathcal{O}(\log{n})$.  Thus for  $n \geq M(c)$ for some sufficiently large $M(c)$, we have a contradiction.  Further, since $\tau$ and $\epsilon$ are effectively computable, $M(c)$ is as well.
\end{proof} 

\begin{remark} The existence of the constant $\tau$ is a consequence of an effective linear forms in logarithms bound and is not computed in \cite{BennettBugeaud:ERFR}, but has a complicated dependence on $\xi$.  Working through the proof of Bennett and Bugeaud's theorem, $\tau$ can be seen to be generally too small for a useful effective bound on the maximal element of the Zsigmondy set; in fact, it is on the order of the reciprocal of the logarithm of the fundamental unit of $\mathbb{Q}(\sqrt{c})$, so $M(c)$ is comparable to the logarithm of the regulator of $\mathbb{Q}(\sqrt{c})$ plus a constant which is large for dynamical purposes.  For example, for $f(z) = z^2 - \frac{3}{2}$, the process gives a value of $M(c)$ close to 80, and computationally checking primitive divisors for 80 iterates is an infeasible task.
\end{remark}

\ssection{The Non-recurrent Case} \label{tight}

In this section we demonstrate that often $M(c)$ is quite small; in fact, if $c$ is chosen so that the critical orbit escapes to infinity, or simply avoids coming back too close to 0, we have $M(c) = 2$.  To that end we prove Theorem \ref{NREff}.  Recall our assumption that $c \notin \mathbb{Z}$.  In order to utilize inequalities \ref{eqn1} and \ref{eqn2}, we connect the elements of the sequence $\{ a_n \}$ to the corresponding Weil heights $ h(f^n(0))$, or find bounds on the modulus of the critical orbit, respectively.  In the case when $|c| > 2^{\frac{d}{d-1}}$, we can successfully use the former approach.

\begin{lemma}  Suppose that $c$ satisfies $|c| > 2^{\frac{d}{d-1}}$.  Then $|f^n(0)| > |c| > 1$ for all $n \geq 2$.
\end{lemma}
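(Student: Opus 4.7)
My plan is a straightforward induction on $n$, using the reverse triangle inequality $|f^{n+1}(0)| \geq |f^n(0)|^d - |c|$ at each step. Write $M = |c|$. The hypothesis $M > 2^{d/(d-1)}$ rearranges to $M^{d-1} > 2^d \geq 2$, equivalently $M^d - M > M$, and this single inequality will drive both the base case and the inductive step.

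For the base case $n=2$, I would compute directly: $f^2(0) = c^d + c$, so
$$|f^2(0)| \geq |c|^d - |c| = M(M^{d-1}-1) > M,$$
where the final inequality uses $M^{d-1} > 2$ (a weak consequence of the hypothesis). For the inductive step, assuming $|f^n(0)| > M$ for some $n \geq 2$, I apply the triangle inequality
$$|f^{n+1}(0)| = |f^n(0)^d + c| \geq |f^n(0)|^d - |c| > M^d - M > M,$$
closing the induction. In particular $|f^n(0)| > M > 1$ for all $n \geq 2$.

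There is no real obstacle here; the only subtlety is choosing the right threshold. Note that the weaker bound $|c| > 2^{1/(d-1)}$ would already give $|f^2(0)| > M$ in the base case, but to propagate through the induction one needs $M^d - M > M$, which is exactly the inequality $M^{d-1} > 2$; the stated hypothesis $|c| > 2^{d/(d-1)}$ is comfortably strong enough (in fact it gives $M^{d-1} > 2^d$, considerably more than required). This slack presumably reflects its later use in the section, where one wants $|f^n(0)|$ bounded below by something genuinely larger than $1$ in order to feed into inequality~(\ref{eqn2}) and extract a contradiction from $n \in \mathcal{Z}(f,0)$.
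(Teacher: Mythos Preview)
Your proof is correct and follows essentially the same approach as the paper: an induction using the reverse triangle inequality, requiring only the consequence $|c|^{d-1} > 2$ of the hypothesis. The paper presents the inductive step more tersely by factoring $|f^n(0)| = |c| \cdot \bigl|\tfrac{f^{n-1}(0)}{c}\cdot f^{n-1}(0)^{d-1} + 1\bigr|$, but the content is identical to yours.
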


\begin{proof} Since 

$$|f^n(0)| = |f^{n-1}(0)^d + c| = |c| \cdot |\frac{f^{n-1}(0)}{c} \cdot f^{n-1}(0)^{d-1} + 1|,$$
the lemma is immediate by induction.
\end{proof}

Denote by $h$ the standard logarithmic Weil height $h(P)$ on $\mathbb{P}^1(\mathbb{Q})$.  We will abuse notation and use $h$ as a height on $\mathbb{Q}$ as well, in which case we have, for $\frac{r}{s}$ in lowest terms,

$$h(\frac{r}{s}) = \log(\max \{ |r|, |s| \}).$$
By the lemma above, when $|c| > 2^{\frac{d}{d-1}},$ the inequality (\ref{eqn1}) becomes the following:

$$h(a_n) \leq \sum_q h(a_{\frac{n}{q}}).$$
Define the usual dynamical canonical height 

$$\hat{h}_f(P) = \lim_{n \rightarrow \infty} \frac{f^n(P)}{d^n},$$ 
and recall that for all $P$ and all $n \in \mathbb{N}$, 

$$\hat{h}_f(f^n(P)) = d^n \hat{h}_f(P).$$
Further, there exists a constant $C$ such that for all $\alpha \in \mathbb{Q}$, 

\begin{equation} \label{HtC} |h(\alpha) - \hat{h}_f(\alpha)| < C.
\end{equation}
Therefore our inequality becomes

\begin{equation} \label{eqn3} \hat{h}_f(a_n) - C \leq \sum_q (\hat{h}_f(a_{\frac{n}{q}}) + C).
\end{equation}
To achieve an effective result, we make the constant $C$ explicit in the following lemma.  \\

\begin{lemma} \label{Const} Let $f(z) = z^d + c$ be as above.  Then we can take the constant $C$ of inequality (\ref{HtC}) to be $h(c)+\log(2)$.  
\end{lemma}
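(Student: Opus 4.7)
The plan is to use the standard telescoping approach: first show that $|h(f(\alpha)) - d\cdot h(\alpha)|$ is bounded uniformly in $\alpha \in \mathbb{Q}$ by $h(c) + \log 2$, then invoke the definition of $\hat{h}_f$ as a limit to deduce the claimed bound on $|h(\alpha) - \hat{h}_f(\alpha)|$.

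For the one-step bound, I would invoke two elementary facts about the Weil height on $\mathbb{Q}$: the identity $h(\alpha^d) = d\cdot h(\alpha)$ (immediate from the lowest-terms description of $h$), and the triangle-type inequality $h(x+y) \leq h(x) + h(y) + \log 2$ for $x, y \in \mathbb{Q}$ (a consequence of the local bound $\max(|x+y|_v, 1) \leq 2\max(|x|_v, 1)\max(|y|_v,1)$ at the archimedean place and $\leq \max(|x|_v,1)\max(|y|_v,1)$ at the non-archimedean places). Applying these to $f(\alpha) = \alpha^d + c$ yields $h(f(\alpha)) \leq d\cdot h(\alpha) + h(c) + \log 2$, and applying them to the rearrangement $\alpha^d = f(\alpha) - c$ yields the reverse inequality $d\cdot h(\alpha) \leq h(f(\alpha)) + h(c) + \log 2$. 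Together these give
$$|h(f(\alpha)) - d\cdot h(\alpha)| \leq h(c) + \log 2.$$

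To pass from the one-step bound to the lemma, I would write the telescoping identity
$$\hat{h}_f(\alpha) - h(\alpha) = \sum_{n=0}^{\infty} \frac{h(f^{n+1}(\alpha)) - d\cdot h(f^n(\alpha))}{d^{n+1}},$$
which converges because $\hat{h}_f(\alpha) = \lim_n h(f^n(\alpha))/d^n$. Bounding the $n$th term in absolute value by $(h(c)+\log 2)/d^{n+1}$ and summing the geometric series gives
$$|\hat{h}_f(\alpha) - h(\alpha)| \leq \frac{h(c)+\log 2}{d-1} \leq h(c) + \log 2,$$
the last inequality holding because $d \geq 2$.

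There is no real obstacle here: the content of the lemma is simply making the constant in inequality (\ref{HtC}) explicit for polynomials of the form $z^d+c$. The only computation that needs any care is verifying the triangle-with-$\log 2$ inequality place by place, and this is standard; everything else is a routine geometric-series manipulation.
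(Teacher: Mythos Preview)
Your proposal is correct and follows essentially the same route as the paper: a one-step bound $|h(f(\alpha))-d\,h(\alpha)|\le h(c)+\log 2$ obtained from local estimates (you package these as the height inequality $h(x+y)\le h(x)+h(y)+\log 2$, the paper writes out the place-by-place bound directly), followed by the identical telescoping sum and the observation $\frac{1}{d-1}\le 1$.
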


\begin{proof} We use the methods of Theorems 3.11 and 3.20 of \cite{Silverman:TAOD}.  Consider $f$ as a morphism  $[\phi_z : \phi_w]$ on $\mathbb{P}^1$ given by $[z : w] \mapsto [z^d+cw^d: w^d]$.  Let $h$ denote the logarithmic Weil height as above, and for each place $v$ of $\mathbb{Q}$, $h_v$ the local height at $v$.  Since
$$|z^d+c|_v \leq \delta_v \max \{ |z|_v^d, |c|_v \},$$
where $\delta_v = 1$ for $v$ non-archimedean and $\delta_v = 2$ for the archimedean place, we have 
$$h_v(\phi(P)) \leq \log{\delta_v} + d h_v(P) + h_v(c).$$
Similarly, we have
$$|z^d|_v \leq |z^d + c - c|_v \leq \delta_v \max \{ |z^d+c|_v, |c|_v \},$$
so 
$$d h_v(P) \leq \log{\delta_v} +  h_v(\phi(P)) + h_v(c).$$
Combining these estimates and taking the sum over all places of $\mathbb{Q}$, we see that
$$-\log{2} - h(c) + h(\phi(P)) \leq dh(P) \leq \log{2} + h(\phi(P)) + h(c),$$
and so 
$$|h(\phi(P)) - dh(P)| \leq h(c) + \log(2).$$

Taking a telescoping sum, we see that

$$|\hat{h}_f(P) - h(P)| \leq \frac{h(c) + \log(2)}{d-1} \leq h(c) + \log(2),$$
as desired.
\end{proof}	

We can now prove an effective Zsigmondy result: \\

\begin{proposition} \label{bigc} Suppose $|c| > 2^{\frac{d}{d-1}}$.  Then $\mathcal{Z}(f,0) = \emptyset$.  
\end{proposition}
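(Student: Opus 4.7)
The plan is to combine the bound $|f^n(0)| > |c|$ from the preceding lemma with a matching upper estimate of the same order, and then to plug both into the Zsigmondy inequality (\ref{eqn1}); this gives a forced upper bound on $\log|a|$ which is inconsistent with the hypothesis. The $n = 2$ case I would handle first via Proposition \ref{N2}: its conclusion that $2 \in \mathcal{Z}(f,0)$ requires $d = 2$ and $a = -(b\pm 1)$, which forces $|c| = 1 \pm 1/b \leq 3/2$, contradicting $|c| > 4 = 2^{d/(d-1)}$.

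For $n \geq 3$ the hypothesis $|c| > 2^{d/(d-1)}$ is precisely $|c|^{d-1} > 2^d$, hence $|c|^d > 2|c|$. Combined with the preceding lemma, an easy induction on $m \geq 1$ shows that both $|f^{m+1}(0)| \geq |f^m(0)|^d/2$ and $|f^{m+1}(0)| \leq 2|f^m(0)|^d$ hold. Writing $L := \log|a|$ and $\delta := \log 2/(d-1)$ and iterating these estimates yields the two-sided bound
\begin{equation*}
  d^{m-1}(L - \delta) \leq \log|a_m| \leq d^{m-1}(L + \delta), \qquad m \geq 1.
\end{equation*}
Substituting the lower bound for $m = n$ and the upper bound for each $m = n/q$ into $\log|a_n| \leq \sum_q \log|a_{n/q}|$ gives
\begin{equation*}
  d^n (L - \delta) \leq s_d(n)(L + \delta), \qquad \text{hence} \qquad L \leq \delta \cdot \frac{d^n + s_d(n)}{d^n - s_d(n)}.
\end{equation*}

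On the other hand, $|c| > 2^{d/(d-1)}$ gives $\log|c| > d\delta$, and $b \geq 2$ gives $\log b \geq (d-1)\delta$, so $L > (2d-1)\delta$. Comparing with the previous inequality, the desired contradiction reduces to the purely combinatorial bound
\begin{equation*}
  s_d(n) < (d-1) d^{n-1} \qquad \text{for all } n \geq 3, \ d \geq 2.
\end{equation*}
I expect this verification to be the main (though routine) obstacle. Using $s_d(n) = \sum_{q \mid n} d^{n/q} \leq \omega(n) \, d^{n/2}$ and $\omega(n) \leq \log_2 n$, the reduced inequality $\omega(n) < (d - 1) d^{n/2 - 1}$ is comfortable for $d \geq 3$ or $n$ sufficiently large, and the handful of small $d = 2$ cases (e.g.\ $s_2(3) = 2 < 4$, $s_2(4) = 4 < 8$, $s_2(6) = 12 < 32$) are immediate by direct substitution. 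Note that the direct use of (\ref{eqn1}) gives a strictly stronger bound on $L$ than the canonical-height inequality (\ref{eqn3}), which is why we bypass Lemma \ref{Const} here.
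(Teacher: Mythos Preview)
Your argument is correct and takes a genuinely different route from the paper's own proof. The paper works through the canonical-height machinery: it invokes Lemma~\ref{Const} to bound $|h-\hat h_f|$ by $h(c)+\log 2$, then applies Ingram's external lower bound $\hat h_f(0)\ge h(c)/d^2$ to reach the inequality $\frac{d^n-s_d(n)}{\omega(n)+1}<1.5\,d^2$, after which the residual cases $d=3,n=3$ and $d=2,n=3,4$ (including the explicit check at $c=\pm 9/2$) must be dispatched by hand. You instead extract the two-sided estimate $d^{m-1}(L-\delta)\le\log|a_m|\le d^{m-1}(L+\delta)$ directly from the recursion and feed it into inequality~(\ref{eqn1}), landing on the clean purely combinatorial constraint $s_d(n)<(d-1)d^{n-1}$. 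This is more elementary: it requires no height theory, no appeal to Ingram's lemma, and no ad hoc verification of specific parameters. The trade-off is that the paper's constant-error height comparison is conceptually reusable in other contexts (and indeed is used elsewhere in the paper), whereas your multiplicative error $d^{m-1}\delta$ is tailored to exactly this growth regime; but for the statement at hand your approach is both shorter and sharper.
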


\begin{proof} First note that by Proposition \ref{N2} and the assumption $|c| > 2^{\frac{d}{d-1}} > 2$, it suffices to prove that $n \notin \mathcal{Z}(f,0)$ for all $n \geq 3$. \\

Let $C$ be as given in inequality (\ref{HtC}). By inequality (\ref{eqn3}), if $a_n$ fails to have a primitive prime divisor, we have 

$$\hat{h}_f(f^n(0)) - C \leq \omega(n)C+ \sum_q \hat{h}_f(f^{\frac{n}{q}}(0)),$$
where $\omega(n)$ denotes the number of distinct prime divisors of $n$.  Therefore we have

$$d^n \hat{h}_f(0) \leq (\omega(n)+1)C + \hat{h}_f(0) \sum_q d^{\frac{n}{q}}.$$

To simplify notation, write $s_d(n) = \sum d^{\frac{n}{q}},$ with $q$ taken over distinct primes of $n$.  Since 0 is not preperiodic, $\hat{h}_f(0) >0,$ and the inequality above can be written 

$$\frac{d^n - s_d(n)}{\omega(n)+1} \leq \frac{C}{\hat{h}_f(0)};$$
by Lemma \ref{Const}, we conclude that

$$\frac{d^n - s_d(n)}{\omega(n)+1} \leq \frac{h(c) + \log{2}}{\hat{h}_f(0)}.$$

We now use a remark following Lemma 6 of \cite{Ingram:LBOT} to get a lower bound for $\hat{h}_f(0)$:

\begin{lemma} [Ingram] Suppose $|c| > 2^{\frac{d}{d-1}}$, and $f(z) = z^d+c$.  Then 

$$\hat{h}_f(c) \geq \frac{1}{d} h(c).$$
\end{lemma}
Consequently, we have
$$\hat{h}_f(0) \geq \frac{1}{d^2} h(c).$$
Thus if $a_n$ has no primitive prime divisor, $n$ must satisfy 

$$\frac{d^n - s_d(n)}{\omega(n)+1} \leq d^2 \frac{h(c)+\log{2}}{h(c)} = d^2 (1 + \frac{\log{2}}{h(c)}) < 1.5 d^2,$$
where the right-hand inequality holds because $b \geq 2$ and $|c| > 2^{\frac{d}{d-1}} > 2$ together imply that $h(c) = \log|a| \geq \log{4}$. \\

Since $d^n - s_d(n)$ grows very quickly with $n$, this gives a strong restriction on $n$; in fact, one can use the bounds $s_d(n) \leq d^{\frac{n}{2}} \log_2(n)$ and $\omega(n) \leq \log_2(n)$ to see that 
$$\frac{d^n - s_d(n)}{(\omega(n) + 1)} > 1.8d^2$$
if $d \geq 4, n \geq 3$, or $d \geq 3, n \geq 4,$ or $d \geq 2, n \geq 5$. \\

Thus the only cases that remain are $d = 3$ and $n =3,$ or $d=2$ and $n = 3,4$, which we check by hand.  \\

If $d=3$ and $n=3$, we compute

$$f^3(0) = \frac{a}{b^{9}} (a^2(a^2+b^2)^3 + b^{8}).$$  
Since $a$ and $b$ are coprime, the term $(a^2(a^2+b^2)^3 + b^{26})$ can have no common divisors with $a$; but since it is a sum of positive integers and $b \geq 2$, $(a^2(a^2+b^2)^3 + b^{26}) \geq 2$ and so is divisible by some prime.  \\

Therefore $a_3$ has a primitive prime divisor for $d =  2$ or $3$.  \\

Finally we turn to the case when $d = 2$ and $n = 4$.  If $d=2$ and $4 \in \mathcal{Z}(f,0),$ then we have 

$$\frac{16 - 4}{4} \leq (1 + \frac{\log{2}}{h(c)}) (\omega(4) + 1),$$
and so 

$$\frac{1}{2} \leq \frac{\log{2}}{h(c)},$$
and so $a \leq 9.$   But by assumption, we have $\frac{a}{b} > 2^{\frac{2}{1}} = 4,$ so the only possibility is $a = \pm 9$ and $b = 2$.  One can check by hand that for these values of $c$, $a_3$ has a primitive prime divisor, and the proposition is proved.
\end{proof}

In the remainder of this section, we cannot necessarily utilize height functions, but non-recurrence of the critical orbit will provide upper and lower bounds on $|f^n(0)|$ for all $n$, which can be used in conjuction with inequality (\ref{eqn2}). \\

We have straightforward bounds when $c$ is positive; the proof of the following lemma is an easy induction: \\

\begin{lemma} Suppose $c > 0$.  Write $C(n) = \max \{ c, c^{d^{n-1}} \}.$ Then for all $n \geq 1$, we have

$$C(n) \leq f^n(0) \leq 2^{\frac{d^{n-1}-1}{d-1}} C(n).$$

\end{lemma}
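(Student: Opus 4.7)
The proof will be a straightforward double induction, bounding $f^n(0)$ above and below separately. The form of $C(n)$ depends on whether $c \geq 1$ or $c < 1$ (in the first case $C(n) = c^{d^{n-1}}$, in the second $C(n) = c$), so I will track both cases in parallel throughout.

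For the base case $n = 1$, both inequalities hold with equality, since $f(0) = c$ and $2^{(d^0-1)/(d-1)} = 1$. For the lower bound, assuming $f^n(0) \geq C(n)$, I compute
$$f^{n+1}(0) = f^n(0)^d + c \geq C(n)^d + c.$$
If $c \geq 1$, then $C(n)^d = c^{d^n} = C(n+1)$, so $f^{n+1}(0) \geq C(n+1) + c \geq C(n+1)$. If $c < 1$, then $C(n) = c = C(n+1)$, and $f^{n+1}(0) \geq c^d + c \geq c = C(n+1)$.

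For the upper bound, the key observation is the elementary inequality
$$2^{\frac{d^n-d}{d-1}} + 1 \leq 2^{\frac{d^n-1}{d-1}},$$
valid for $n \geq 1$ and $d \geq 2$, which follows from factoring out $2^{(d^n-d)/(d-1)}$ and noting $2 - 1 = 1 \leq 2^{(d^n-d)/(d-1)}$. Assuming $f^n(0) \leq 2^{(d^{n-1}-1)/(d-1)} C(n)$, raising to the $d$th power gives
$$f^{n+1}(0) \leq 2^{\frac{d^n-d}{d-1}} C(n)^d + c.$$
In the case $c \geq 1$, we have $C(n)^d = C(n+1)$ and $c \leq C(n+1)$, so $f^{n+1}(0) \leq \bigl(2^{(d^n-d)/(d-1)} + 1\bigr) C(n+1) \leq 2^{(d^n-1)/(d-1)} C(n+1)$. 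In the case $c < 1$, $C(n) = c = C(n+1)$ and $c^d \leq c$, so $f^{n+1}(0) \leq \bigl(2^{(d^n-d)/(d-1)} + 1\bigr) c$, and the same inequality closes the induction.

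No step here is really an obstacle; the only mild subtlety is remembering to split on the size of $c$ so that $C(n)^d$ can be identified with $C(n+1)$ (when $c \geq 1$) or absorbed into a constant factor times $C(n+1) = c$ (when $c < 1$). The proof will take just a few lines.
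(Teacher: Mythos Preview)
Your proof is correct and follows exactly the approach indicated in the paper, which simply notes that ``the proof of the following lemma is an easy induction'' without spelling out the details. Your case split on $c \geq 1$ versus $c < 1$ and the elementary inequality $2^{(d^n-d)/(d-1)} + 1 \leq 2^{(d^n-1)/(d-1)}$ are precisely what is needed to carry out that induction.
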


\begin{proposition} \label{dodd} Suppose $c > 0$, or $c < 0$ and $d$ is odd.  Then $\mathcal{Z}(f,0) = \emptyset$.
\end{proposition}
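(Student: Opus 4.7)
The plan is to reduce to the case $c > 0$ by conjugation and then derive a contradiction from inequality~(\ref{eqn2}) using the preceding lemma. If $c < 0$ and $d$ is odd, the polynomial $g(z) := -f(-z) = z^d + (-c)$ has the same form as $f$ but with positive parameter $-c$, and one checks by induction that $g^n(0) = -f^n(0)$. Hence the numerator sequences agree up to sign, so $\mathcal{Z}(f,0) = \mathcal{Z}(g,0)$ and we may assume $c = a/b > 0$ with $a \geq 1$, $b \geq 2$. Proposition~\ref{N2} then rules out $n = 2$, since $a = -(b \pm 1)$ cannot hold with $a$ positive.

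For $n \geq 3$, I would substitute the bounds $C(n) \leq f^n(0) \leq 2^{(d^{n-1}-1)/(d-1)} C(n)$ from the lemma into inequality~(\ref{eqn2}) and split according to the two forms of $C(m) = \max(c, c^{d^{m-1}})$. When $c > 1$, one has $\log C(m) = d^{m-1}\log c$, and the $d^{m-1}\log c$ and $d^{m-1}\log b$ terms on both sides of the inequality combine, via $\log c + \log b = \log a$, into $d^{m-1}\log a$ with matching coefficients; absorbing the constant-of-height contribution via $\log a \geq \log 2$ gives a clean linear inequality in $\log a$. When $c < 1$, $\log C(m) = \log c$ is constant and negative, so the right side of~(\ref{eqn2}) carries an extra $(\omega(n)-1)\log c \leq 0$ term which can be discarded, and $\log b \geq \log 2$ is used in place of $\log a \geq \log 2$. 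In either case the Zsigmondy hypothesis collapses to
\begin{equation*}
(d-1) d^{n-1} \leq s_d(n).
\end{equation*}

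To contradict this I would use the coarse estimate $s_d(n) \leq \omega(n) d^{n/2}$ (valid since every prime divisor of $n$ is at least $2$), which reduces it to $(d-1) d^{n/2-1} \leq \omega(n)$. The left side grows exponentially in $n$ while $\omega(n) \leq \log_2 n$, so this fails for all but finitely many $n$; the residual small cases $n \in \{3, 4\}$ with $d = 2$ are checked directly using $\omega(3) = \omega(4) = 1$ against $(d-1) d^{n/2-1} \in \{\sqrt{2}, 2\}$. The main obstacle is the bookkeeping across the $c > 1$ versus $c < 1$ split --- the relevant scale ($\log a$ versus $\log b$) and the exact form of the sharpening differ between the two regimes --- and one has to ensure both calculations collapse to a common inequality that is tight enough to eliminate all the small-$n$ boundary cases that lie outside the reach of the coarse asymptotic bound on $\omega(n)$.
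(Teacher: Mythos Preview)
Your proposal is correct and follows essentially the same route as the paper: reduce to $c>0$ by the sign-conjugation for odd $d$, dispose of $n=2$ via Proposition~\ref{N2}, then feed the lemma's bounds into inequality~(\ref{eqn2}) to reach the same terminal inequality $(d-1)d^{n-1}\le s_d(n)$, which is impossible for $d\ge 2$, $n\ge 3$. The only cosmetic difference is that the paper treats the $c>1$ and $c<1$ cases in one stroke by observing that the term $d\bigl[\log C(n)-\sum_q\log C(n/q)\bigr]$ is nonnegative and discarding it (using $\log b\ge\log 2$ throughout), whereas you split explicitly and in the $c>1$ case merge $\log c+\log b=\log a$ before invoking $\log a\ge\log 2$; both collapses yield the identical final inequality.
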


\begin{proof} First note that it is sufficient to prove the proposition for $c >0$, since if $c<0$ and $d$ is odd, we may replace $c$ with $-c$ and the forward orbit of 0 will be unchanged, modulo sign.  Therefore we assume that $c > 0$ (and thus the forward orbit consists of positive numbers).  In light of the remark following Proposition \ref{N2}, we must prove that $n \notin \mathcal{Z}(f,0)$ for all $n \geq 3$ and all $d \geq 2.$ \\

We recall that if $n \in \mathcal{Z}(f,0)$, then we have

$$\log{f^n(0)} + d^{n-1} \log{b} \leq \sum_q (\log{f^{\frac{n}{q}}(0) }+ d^{\frac{n}{q}-1} \log{b}),$$
with the sum over distinct primes $q$ dividing $n$.  Multiplying by $d$ and applying the preceding lemma, we have:

$$ d \log{C(n)} + d^n \log{b} \leq \sum_q \left[ \frac{d^{\frac{n}{q}}-d}{d-1} \log{2} + d\log{C(\frac{n}{q})} + d^{\frac{n}{q}} \log{b} \right];$$
rearranging, we have

$$d \left[ \log{C(n)} - \sum_q \log{C(\frac{n}{q})} \right] + \left[ d^n - s_d(n) \right] \log{b} \leq  \frac{1}{d-1} s_d(n) \log{2}.$$

Checking by cases, we see that the left-most term is always non-negative, and therefore we have the inequality

$$\left[ d^n - s_d(n) \right] \log{b} \leq \frac{1}{d-1} s_d(n) \log{2}.$$

By assumption, $c$ is non-integral and so $b \geq 2$, and therefore

$$\left[ d^n - s_d(n) \right] \log{2} \leq \frac{1}{d-1} s_d(n) \log{2},$$
and so 

$$d^n - \frac{d}{d-1} s_d(n)  \leq 0,$$
which is impossible for any $d \geq 2$, $n \geq 3$.  \\

\end{proof}

Next we consider the situation when $-1 < c < 0$ and $d$ is even:

\begin{proposition} \label{more1} Suppose $-1<c<0$ and $d$ is even.  Then $\mathcal{Z}(f,0) = \emptyset$, unless $d = 2$ and $a + b = 1,$ in which case $\mathcal{Z}(f,0) = \{ 2 \}.$
\end{proposition}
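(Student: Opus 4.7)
The case $n = 2$ follows directly from Proposition \ref{N2}: it asserts $2 \in \mathcal{Z}(f,0)$ iff $d = 2$ and $a = -(b \pm 1)$. The hypothesis $-1 < c < 0$ forces $|a| < b$, ruling out $a = -(b+1)$; thus only $a = -(b-1)$, equivalently $a + b = 1$, is compatible, and in this case $2 \in \mathcal{Z}(f,0)$ iff $d = 2$.

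For $n \geq 3$ I would show $n \notin \mathcal{Z}(f,0)$ by feeding inequality (\ref{eqn2}) the following two-sided bound, proved by induction using that $d$ is even and $|c| < 1$:
$$|c|\bigl(1 - |c|^{d-1}\bigr) \;\leq\; |f^n(0)| \;\leq\; |c| \qquad (n \geq 1).$$
The induction is straightforward: if $f^n(0) \in [c,\, c + |c|^d] \subset [c, 0)$, then $f^n(0)^d \in [0, |c|^d]$ (here is where $d$ even enters), so $f^{n+1}(0) = f^n(0)^d + c \in [c,\, c + |c|^d]$, giving the claimed interval and hence the modulus bounds.

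Substituting these bounds into inequality (\ref{eqn2}) and rearranging yields
$$\Bigl(d^{n-1} - \tfrac{1}{d}\, s_d(n)\Bigr) \log b \;\leq\; (\omega(n) - 1) \log|c| - \log\bigl(1 - |c|^{d-1}\bigr).$$
The first term on the right is $\leq 0$ since $\log|c| < 0$ and $\omega(n) \geq 1$. For the second, I would write $-\log(1 - |c|^{d-1}) = (d-1)\log b - \log(b^{d-1} - |a|^{d-1})$, then observe that coprimality of $a$ and $b$ together with $|a| < b$ implies $b^{d-1} - |a|^{d-1}$ is a positive integer, hence $\geq 1$, so $-\log(1 - |c|^{d-1}) \leq (d-1)\log b$. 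Dividing through by $\log b$ collapses the inequality to
$$d^{n-1} - \tfrac{1}{d}\, s_d(n) \;\leq\; d - 1,$$
which, via the coarse estimate $s_d(n) \leq \omega(n)\, d^{n/2} \leq \log_2(n)\, d^{n/2}$, is easily seen to fail for every $d \geq 2$ and every $n \geq 3$.

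The main obstacle is the sharp lower bound $|f^n(0)| \geq |c|(1 - |c|^{d-1})$; once that is available, the key arithmetic observation that $b^{d-1} - |a|^{d-1}$ is a positive integer absorbs the logarithmic singularity of $1 - |c|^{d-1}$ as $|c| \to 1$ and makes the whole inequality collapse cleanly, independent of how close $c$ is to $-1$.
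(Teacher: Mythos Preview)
Your proposal is correct and follows essentially the same approach as the paper: the same two-sided bound $|c|(1-|c|^{d-1}) \leq |f^n(0)| \leq |c|$, the same substitution into inequality~(\ref{eqn2}), the same key observation that $b^{d-1} - |a|^{d-1} \geq 1$ to bound $-\log(1-|c|^{d-1})$ by $(d-1)\log b$, and the same final inequality (your $d^{n-1} - \tfrac{1}{d}s_d(n) \leq d-1$ is the paper's $d^n - s_d(n) - d^2 + d \leq 0$ divided by $d$). You add a bit more detail in spelling out the $n=2$ analysis and the induction for the orbit bounds, but the argument is the same.
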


\begin{proof} By Proposition \ref{N2}, we must prove $n \notin \mathcal{Z}(f,0)$ for all $n \geq 3.$  We utilize the following bounds, which by assumption on $c$ and $d$ hold for all $n \geq 0$:

$$|c| (1-|c|^{d-1}) \leq |f^n(c)| \leq |c|.$$

Together  inequality (\ref{eqn2}) and these bounds imply that we have $n \in \mathcal{Z}(f,0)$ only if 

$$\log(|c|(1-|c|^{d-1})) + d^{n-1} \log{b} \leq \omega(n)\log|c| + \log{b} \sum_q d^{\frac{n}{q}-1}.$$
Multiplying by $d$ and rearranging, we have

\begin{eqnarray*} (d^n - s_d(n)) \log{b} &\leq& d(\omega(n) -1) \log{|c|} - d \log(1 - |c|^{d-1}) \\
&\leq& - d \log(1 - |c|^{d-1}) \\
&=& d(d-1) \log{b} - d \log(b^{d-1} - |a|^{d-1}) \\
&\leq& d(d-1) \log{b}. \\
\end{eqnarray*}
We conclude that 

$$d^n - s_d(n) - d^2 + d \leq 0,$$
which is impossible for all $d \geq 2$, $n \geq 3.$  Thus the proposition is proved. 
\end{proof}

The final non-recurrent case tightens the bound on $|c|$: \\

\begin{proposition} \label{more2} Suppose $2^{\frac{1}{d-1}} < |c| < 2^{\frac{d}{d-1}}$ Then $\mathcal{Z}(f,0) = \emptyset$, unless $d=2$ and $a+b = -1$, in which case $\mathcal{Z}(f,0) = \{ 2 \}$.  
\end{proposition}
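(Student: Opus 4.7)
The plan is to mirror the strategy of Propositions \ref{dodd} and \ref{more1}: I will establish upper and lower bounds on $|f^n(0)|$, substitute them into inequality (\ref{eqn2}), and derive a contradiction for every $n \geq 3$. By Proposition \ref{dodd} I may assume $d$ is even and $c < 0$. For $n = 2$, Proposition \ref{N2} requires $d = 2$ together with $|a+b|=1$; in our range this forces $|c| = (b \pm 1)/b \leq 3/2 < 2 = 2^{1/(d-1)}$ for $b \geq 2$, contradicting the hypothesis, so $2 \notin \mathcal{Z}(f,0)$ under the hypotheses of the proposition.

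Next, using $|c|^{d-1} > 2$ and the observation that $d$ even with $c < 0$ forces $f^n(0) > 0$ for $n \geq 2$, I would prove by induction the two bounds
$$|c| \leq |f^n(0)| \leq |c|^{d^{n-1}}$$
for all $n \geq 1$. The lower bound follows from $|f^n(0)| \geq |f^{n-1}(0)|(|f^{n-1}(0)|^{d-1} - 1)$ combined with $|f^{n-1}(0)|^{d-1} \geq |c|^{d-1} > 2$, while the upper bound comes from the identity $f^n(0) = f^{n-1}(0)^d - |c| \leq f^{n-1}(0)^d$ for $n \geq 2$.

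Substituting these into inequality (\ref{eqn2}) for $n \in \mathcal{Z}(f, 0)$ collapses it to
$$(d^{n-1} - s_d(n)/d)\log b \leq (s_d(n)/d - 1)\log|c|.$$
For prime $n$, $s_d(n) = d$, so the right side vanishes while the left is strictly positive---an immediate contradiction. For composite $n$ I would apply $\log|c| < \frac{d}{d-1}\log 2$ on the right and $\log b \geq \log 2$ on the left to reduce the estimate to
$$(d-1)d^n + d^2 \leq (2d-1)\, s_d(n),$$
and then verify that this fails for every $d \geq 2$ and every composite $n \geq 4$, by direct computation at small $n$ and the coarse bound $s_d(n) \leq \omega(n) d^{n/2} \leq d^{n/2}\log_2 n$ otherwise.

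The main obstacle is that the margin is narrow: the only cheap uniform lower bound $|c|$ on $|f^n(0)|$ is essentially the weakest one can prove, and both hypotheses $|c| > 2^{1/(d-1)}$ and $|c| < 2^{d/(d-1)}$ must be used at full strength to close the last inequality. The tightest instances are the small composite values $n = 4$ and $n = 6$ with $d = 2$, where the numerical verification of $(d-1)d^n + d^2 > (2d-1) s_d(n)$ has a finite but non-trivial margin; all other cases are comfortably handled by the asymptotic bounds on $s_d(n)$ and $\omega(n)$ used earlier in the paper.
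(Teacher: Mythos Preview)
Your proof is correct and follows essentially the same strategy as the paper: reduce to $d$ even and $c<0$, establish the lower bound $|f^n(0)|\ge |c|$, plug bounds into inequality~(\ref{eqn2}), and derive a contradiction for $n\ge 3$. Your upper bound $|f^n(0)|\le |c|^{d^{n-1}}$ is sharper than the paper's $|f^n(0)|\le 2^{3d^{n-1}-1}$, which yields a slightly cleaner final inequality, and your observation that the exception $d=2,\ a+b=-1$ is actually vacuous under the hypothesis $|c|>2^{1/(d-1)}$ is a valid refinement the paper does not make explicit.
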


\begin{proof} 

If $d$ is odd this follows from Proposition \ref{dodd}, so we assume $d$ is even.  Again it is easy to bound the critical orbit by induction; since $2^{\frac{1}{d-1}} < |c| < 2^{\frac{d}{d-1}}$, we have
$$\log|c| \leq \log|f^n(0)| \leq (3d^{n-1}-1) \log{2}$$
for all $n \in \mathbb{N}.$

Suppose $n \in \mathcal{Z}(f,0)$.  Then combining the above bounds with inequality (\ref{eqn2}), we have 

$$d \log|c| + d^n \log{b} \leq \sum_q ((3d^{\frac{n}{q}} -d)\log{2} + d^{\frac{n}{q}} \log{b}),$$
and so 

$$(d^n - s_d(n)) \log{b} \leq 3s_d(n) \log{2} - \omega(n) d \log{2},$$ 
which is impossible for $b \geq 2, d \geq 2$ and $n \geq 3$. 

\end{proof}

\begin{proof} [Proof of Theorem \ref{NREff}] The Theorem follows immediately from Propositions \ref{bigc}, \ref{dodd}, \ref{more1} and \ref{more2}.
\end{proof}

\ssection{Computing $M(c)$ on the boundary of the Mandelbrot set} \label{Mand}

In Section \ref{tight} we showed that if there were a norm-based obstruction to critical orbit recurrence, then $M(c)$ is quite small.  One can also construct a dynamical obstruction to recurrence and achieve a small $M(c)$ for certain values of $c$, which is outlined below for $d=2$.   In particular, we will prove that $M(c) \leq 3$ for those values of $c$ which are not too close to any $c$ for which 0 is preperiodic; i.e., the centers of hyperbolic components of the Mandelbrot set.  We are indebted to Xavier Buff for suggesting this approach.  \\

Throughout this section, we let $f_c(z) = z^2+c$, considering $c$ as a complex parameter.  For each $n \in \mathbb{N}$, fix $\rho_n > 0$, and define $D(n, \rho_n)$ to be the set of complex parameters $c$ such that 0 lies in an attracting basin of some point $a$ with exact period $n$ and $|(f_c^n)'(a)| \leq \rho_n$.  

\begin{theorem} \label{CA} Define $D(n, \rho_n)$ as above with $\rho_n = \min \{ \frac{1}{4}, \frac{1}{2^{2^{n-2}}} \}$.  Write

$$\mathcal{D} := \mathbb{C} - \bigcup_{n \in \mathbb{N}} D(n, \rho_n).$$
Then for all $c = \frac{a}{b} \in \mathcal{D},$ we can take $M(c) = 3,$ with $3 \in \mathcal{Z}(f,0)$ if and only if $c = -\frac{7}{4}$.  
\end{theorem}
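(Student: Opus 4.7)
The plan is to combine the Zsigmondy upper bound from inequality \eqref{eqn2} with a lower bound on $|f_c^n(0)|$ that holds whenever $c \in \mathcal{D}$ (obtained via de Branges), and show these two are incompatible for all $n \geq 4$; the case $n = 3$ is reduced to a short Diophantine check. For the upper bound, $c \notin M$ (outside the Mandelbrot set) is immediate, since the critical orbit escapes so rapidly that \eqref{eqn2} is already violated for $n \geq 3$. So we may assume $c \in M$, in which case $|f_c^k(0)| \leq 2$ for all $k$; under $n \in \mathcal{Z}(f_c, 0)$, inequality \eqref{eqn2} with $d = 2$ reads
\[ |f_c^n(0)| \leq 2^{\omega(n)} \cdot b^{-\left(2^{n-1} - \sum_{q\mid n} 2^{n/q-1}\right)}, \]
which for $b \geq 2$ and $n \geq 4$ is doubly exponentially small in $n$.

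For the lower bound, $F_n(c) := f_c^n(0)$ is a monic polynomial in $c$ of degree $2^{n-1}$ whose roots are exactly the centers of the hyperbolic components $H_k \subseteq M$ of period $k \mid n$. By Douady--Hubbard, on each such $H_k$ the multiplier map $\mu_k : H_k \to \mathbb{D}$ is a conformal isomorphism sending the center $c_0$ to $0$. Applying de Branges's theorem --- through the Koebe distortion and $1/4$ theorems --- to the normalized inverse $\psi_k := \mu_k^{-1}$ yields
\[ |c - c_0| \geq \frac{|\mu_k(c)|}{(1+|\mu_k(c)|)^2}|\psi_k'(0)| \text{ on } H_k, \qquad H_k \supseteq D\!\left(c_0,\, \tfrac{1}{4}|\psi_k'(0)|\right), \]
so the hypothesis $c \notin D(k, \rho_k)$ forces $|c - c_0| \geq \tfrac{\rho_k}{4}|\psi_k'(0)|$ regardless of whether $c$ lies in $H_k$. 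An explicit computation of $\mu_k'(c_0) = 2^k F_k'(c_0)\prod_{j=1}^{k-1} f_{c_0}^j(0)$, combined with $|f_{c_0}^j(0)| \leq 2$ and the inductive estimate $|F_k'(c_0)| \leq 4^k/3$, then yields $|\psi_k'(0)| \geq 2^{-4k+2}$.

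The main obstacle is turning these per-center bounds into a useful global lower bound on $|F_n(c)| = \prod_{c_0}|c - c_0|$: since the product has $2^{n-1}$ factors, a naive product bound is far too weak. The key is that $\rho_k \leq 1/4$ together with the disjointness of the preimages $\mu_k^{-1}(D(0,\rho_k))$ forces $c \in \mathcal{D}$ to lie at distance $\gtrsim \rho_k |\psi_k'(0)|$ from at most one ``nearest'' center, with the remaining $2^{n-1}-1$ factors bounded below by constants from the separation geometry of hyperbolic components of $M$. The value $\rho_n = \min\{1/4,\, 2^{-2^{n-2}}\}$ is calibrated so the resulting lower bound on $|f_c^n(0)|$ dominates the above upper bound for every $n \geq 4$. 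Finally, for $n = 3$, the primality of $3$ reduces the Zsigmondy condition to $a_3 \mid a_1$; the explicit formula $a_3 = a\bigl(a(a+b)^2 + b^3\bigr)$ then forces $a(a+b)^2 + b^3 = \pm 1$, a Thue-type equation whose unique solution with $\gcd(a,b) = 1$ and $b \geq 2$ is $(a,b) = (-7,4)$, i.e., $c = -7/4$.
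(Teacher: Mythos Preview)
Your route to the lower bound on $|f_c^n(0)|$ has a genuine gap at exactly the step you flag as ``the main obstacle.'' Factoring $F_n(c)=\prod_{c_0}(c-c_0)$ and bounding each of the $2^{n-1}$ factors individually cannot yield a usable lower bound: even granting your per-center estimate $|c-c_0|\gtrsim\rho_k|\psi_k'(0)|$ for the nearest center, you then assert that the remaining $2^{n-1}-1$ factors are bounded below by ``constants from the separation geometry of hyperbolic components,'' and this is neither proved nor true in any form strong enough to help. All centers lie in $M$, so each $|c-c_0|\leq 4$; but for the product to exceed $2^{-2^{n-2}}$ you would need the geometric mean of the non-nearest $|c-c_0|$ to exceed $2^{-1/2}$, and there is no such uniform separation---centers of period dividing $n$ cluster (along the real period-doubling cascade, inside satellite copies, etc.), so many factors are genuinely small and any per-factor argument gives a doubly-exponentially small product, far too weak against the upper bound from \eqref{eqn2}.

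The paper avoids the product entirely. It applies the Maximum Modulus Principle to $c\mapsto 1/f_c^n(0)$ on a large disk with $\bigcup_{k\mid n}D(k,\rho)$ removed, reducing the lower bound to a single parameter $c$ on some $\partial D(k,\rho)$, where $0$ lies in the immediate basin of a period-$k$ point $a$ with multiplier of modulus exactly $\rho$. There the argument moves to the \emph{dynamical} plane rather than the parameter plane: conjugating $f_c^n$ on that basin to a degree-$2$ Blaschke product on $\mathbb{D}$ and using de Branges to control the distortion of the Riemann map gives $|a|/|f_c^n(0)|\leq 8$ directly; combined with the chain-rule bound $|a|\geq \rho\,2^{-(2n-1)}$ this yields $|f_c^n(0)|\geq \rho\,2^{-(2n+2)}$ with no product over centers. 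Your parameter-plane Koebe estimates on the inverse multiplier map are correct as far as they go, and your treatment of $n=3$ (reducing to $a(a+b)^2+b^3=\pm1$ and the solution $c=-7/4$) is fine, but the missing ingredient---collapsing the exponentially many factors---is precisely what the paper's Maximum Modulus step supplies.
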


The strategy for proving the above is to conformally conjugate $f^n$ on the immediate basin of the attracting cycle to a Blaschke product on the unit disk.  Then the theorem of de Branges (see Theorem \ref{dB}) and the Maximum-Modulus Principle provide the following uniform bound:

\begin{proposition} Fix $n$ and $0 < \rho < \frac{1}{4}.$  Then for all $c \in \mathbb{C} \setminus (\bigcup_{k \mid n} D(k, \rho)),$ we have 
$$|f_c^n(0)| \geq \rho \cdot \frac{1}{2^{2n+2}}.$$
\end{proposition}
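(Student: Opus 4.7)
The strategy outlined in the text is to use a conformal conjugacy between $f_c^k$ restricted to an immediate attracting basin and a Blaschke product on $\mathbb{D}$, then exploit de Branges's theorem (through the Koebe distortion estimates it implies) together with the Maximum-Modulus Principle. I would argue by contrapositive: assume $|f_c^n(0)| < \rho \cdot 2^{-(2n+2)}$ and produce some $k \mid n$ with $c \in D(k,\rho)$.

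The first step is to localize the dynamics. If $c$ lies outside the Mandelbrot set, the standard escape estimate shows $|f_c^n(0)|$ is large, ruling this out. For $c$ in the Mandelbrot set, the extreme closeness of $f_c^n(0)$ to $0$ forces $0$ into a periodic Fatou component via a normality argument (Montel); since the only critical point of $f_c$ is $0$ and a rotation domain must be free of critical points, this component lies in an attracting cycle of some period $k$. Moreover, the hypothesis $|f_c^n(0)|$ very small forces $k \mid n$: otherwise, $f_c^n(0)$ sits in a different Fatou component $U_{n \bmod k}$ of the cycle and is bounded away from $0 \in U_0$ by a Koebe estimate on the Riemann maps of the two components.

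Having identified $k \mid n$, let $a$ be the attracting point in the component $U_0 \ni 0$, $\lambda$ the multiplier, and $\phi\colon \mathbb{D} \to U_0$ the Riemann map with $\phi(0) = a$. Since $U_0$ contains the unique critical point, $f_c^k\colon U_0 \to U_0$ is a degree-$2$ proper map, so $B := \phi^{-1} \circ f_c^k \circ \phi$ is a degree-$2$ Blaschke product with $B(0) = 0$ and $|B'(0)| = |\lambda|$. With $n = mk$, $w_0 := \phi^{-1}(0)$, and $w_m := B^m(w_0)$, we have $|f_c^n(0)| = |\phi(w_m) - \phi(w_0)|$. Koebe's distortion theorem (a consequence of de Branges) gives
\[
|\phi(w_m) - \phi(w_0)| \;\geq\; \frac{|\phi'(0)|\,|w_m - w_0|}{(1+|w_0|)^2(1+|w_m|)^2} \;\geq\; \tfrac{1}{16}\,|\phi'(0)|\,|w_m - w_0|,
\]
while Koebe's $1/4$-theorem identifies $|\phi'(0)|$ up to a universal constant with $\operatorname{dist}(a, \partial U_0)$, and a Schwarz-Pick estimate applied to the iterate $B^m$ yields a lower bound on $|w_m - w_0|$ involving $|\lambda|^m \geq \rho^m$ and $|w_0|$. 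Assembling these pieces with the Maximum-Modulus Principle bounds out the constants to produce $|f_c^n(0)| \geq \rho/2^{2n+2}$.

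The main obstacle will be recovering the precise constant $\rho/2^{2n+2}$: the factor $2^{2n+2} = 4 \cdot 4^n$ points to accumulating a Koebe distortion factor of $4$ at each of the $n$ iterations together with one application of the $1/4$-theorem. Arranging the argument so the exponent is exactly $n$ (rather than $m$ or $k$) requires choosing the conjugacy setup and the Schwarz-Pick application compatibly. A secondary obstacle is handling boundary parameters of the Mandelbrot set or parabolic parameters, where the attracting-basin argument degenerates; these cases likely need a separate continuity or perturbation argument.
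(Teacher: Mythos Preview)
Your contrapositive strategy has a genuine gap at the very first step. From the single inequality $|f_c^n(0)|<\rho\cdot 2^{-(2n+2)}$ you cannot conclude, via Montel or otherwise, that $0$ lies in an attracting basin: one small iterate says nothing about normality of the full family $\{f_c^m\}$. Parameters on the boundary of the Mandelbrot set, parabolic parameters, Siegel parameters, and Cremer parameters can all produce arbitrarily small values of $|f_c^n(0)|$ for suitable $n$, yet none of them has an attracting cycle. You flag this as a ``secondary obstacle'' to be handled by continuity or perturbation; in fact it is the primary obstacle, and no such patch is available parameter-by-parameter.

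The paper sidesteps this entirely by using the Maximum-Modulus Principle in the \emph{parameter} variable, not in the dynamical variable. The map $c\mapsto f_c^n(0)$ is a polynomial in $c$ whose zeros are exactly the centers of hyperbolic components of period dividing $n$; once the sets $D(k,\rho)$ (which contain these centers) are excised, this polynomial is nonvanishing on the remaining bounded region, so its minimum modulus is attained on $\partial D(k,\rho)$ for some $k\mid n$. At such a boundary parameter there \emph{is}, by construction, an attracting cycle with multiplier of modulus exactly $\rho$, and only then does the Blaschke/de Branges analysis begin. This single move eliminates all the degenerate dynamical cases you were worried about.

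A second, smaller point: your reading of the constant $2^{2n+2}$ as ``a Koebe factor of $4$ at each of the $n$ iterations'' is off. In the paper the de Branges distortion is used once, yielding a universal bound $|a|/|f_c^n(0)|\le 8$ where $a$ is the attracting periodic point. The factor $2^{2n-1}$ comes instead from the multiplier identity $\rho=2^n\prod_{0\le k<n}|f_c^k(a)|$ together with $|f_c^k(a)|\le 2$, which gives $|a|\ge \rho\,2^{-(2n-1)}$. Your plan of bounding $|\phi'(0)|$ via $\operatorname{dist}(a,\partial U_0)$ and then applying Schwarz--Pick would introduce quantities (the inradius of $U_0$) for which you have no control.
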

(In particular, if $c$ lies on the boundary of the Mandelbrot set, the lower bound holds for all $n$ and any choice of $\rho < \frac{1}{4}$).

\begin{proof} We will for convenience suppress the dependence of $D(n, \rho)$ on $\rho$. Choose a radius $R$ sufficiently large so that $|f^n_c(0)| \gg 1$ for all $|c| \geq R$, and consider the domain 

$$D_n = \mathbb{D}(0, R) \cap (\mathbb{C} \setminus \bigcup_{k \mid n} D(k)).$$

Since $D_n$ does not contain any point $c$ with critical period dividing $n$, $f^n_c(0)$ is a nonvanishing holomorphic function on $D_n$.  Therefore we can apply the maximum-modulus principle to the reciprocal of $f^n_c(0)$ as a function of $c$ on this domain, and we see that the minimum value of $|f^n_c(0)|$ must be obtained on the boundary of $D_n$.  By our choice of $R$, this minimum is in fact obtained on the boundary of some $D(k)$; i.e. $|f^n_c(0)|$ as a function of $c$ is bounded below on $D_n$ by the value of $|f^n_c(0)|$ when $c$ is chosen such that 0 lies in an attracting basin of a point $a$ with period $k$ dividing $n$, with multiplier of modulus $\rho$.  Thus it suffices to provide a bound for these boundary $c$.\\

Suppose $c$ is a parameter such that 0 is in the basin of attraction of a point $a$ of exact period $n$ and multiplier $\rho$, and denote the immediate basin of attraction of $a$ by $V$.  We have a conformal isomorphism $\phi: \mathbb{D} \rightarrow V$; choose coordinates so that $\phi(0) = a$.  Write $g := \phi^{-1} \circ f^n_c \circ \phi.$ \\

Note that $f^n_c(z)$ is a proper map on $V$ and has well-defined degree.  By the chain rule, 0 is the only critical point of $f^n_c(z)$ which lies inside of $V$, and it has ramification index 2.  Therefore, since $V$ is simply connected, the Riemann-Hurwitz formula implies that $f^n_c$ is a degree 2 self-map of $V$.  So $g(w)$ is a proper, holomorphic, degree 2 map of the unit disk to itself which fixes 0.  Therefore $g$ is a Blaschke product:

$$g(w) = \lambda w \cdot \frac{\alpha - w}{1 - \bar{\alpha}w}$$
for some $\alpha \in \mathbb{D}$ and $|\lambda| = 1$.  But computing the derivative of $g$, we see that 

$$\rho = |g'(0)| = |\lambda| |\alpha| = |\alpha|.$$ 
Thus, applying a rotational coordinate change if necessary, we may assume that $\alpha = \rho,$ and so 

$$g(w) = \lambda w \cdot \frac{\rho - w}{1 - \rho w}.$$

We wish to use this correspondence to find an upper bound for the ratio $\frac{|a|}{|f^n_c(0)|}$, which along with a lower bound for $|a|$ will provide the desired lower bound for $|f^n_c(0)|$.  We normalize $\phi$, by defining $\psi: \mathbb{D} \rightarrow \mathbb{C}$ to be

$$\psi(w) = \frac{\phi(w) - a}{\phi'(0)}.$$

Let $p = \phi^{-1}(0)$; since $p$ is the critical point of $g$, we find $p = \frac{1 - \sqrt{1-\rho^2}}{\rho},$ and so in particular our assumption that $0 < \rho < \frac{1}{4}$ implies $0 < p < 4 - \sqrt{15}$.  We have:
\begin{equation} \label{estimate}
\frac{|a|}{|f^n_c(0)|} \cdot \frac{|g(p)-p|}{|p|} = \frac{|a|}{|p|} \cdot \frac{|g(p)-p|}{|f^n_c(0)|} = \frac{|\phi(0) - \phi(p)|}{|p-0|} \cdot \frac{|g(p) - p|}{|\phi(g(p)) - \phi(p)|}.
\end{equation}

We recall the deep theorem of de Branges (see \cite{deBranges:APOT} or the excellent expository article \cite{Zorn:TBC}):\\

\begin{theorem} \label{dB} [de Branges] Suppose $\psi(w): \mathbb{D} \rightarrow \mathbb{C}$ is one-to-one, with $\psi(0) = 0$ and $\psi'(0) = 1$.  Then the coefficients of the power series expansion
$$\psi(w) = z + a_2z^2 + a_3z^3 + ...$$
satisfy $|a_n| \leq n$ for all $n \geq 2$. 
\end{theorem}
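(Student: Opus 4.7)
The plan is to follow de Branges' original strategy: reduce the Bieberbach bound $|a_n| \leq n$ to Milin's conjecture on the logarithmic coefficients of a univalent function, and establish the latter along a Loewner flow via a positivity statement for weighted sums of Jacobi polynomials (the Askey-Gasper inequality).

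First I would exploit compactness to restrict attention to a convenient subclass. The class $S$ of $\psi$ normalized as in the statement is compact in the topology of locally uniform convergence on $\mathbb{D}$, and each coefficient functional $\psi \mapsto a_n(\psi)$ is continuous, so it suffices to prove the bound on a dense subclass. The single-slit mappings -- those whose image is $\mathbb{C}$ minus a Jordan arc going to infinity -- are dense in $S$, and they embed canonically into a one-parameter family $\{f(\cdot,t)\}_{t \geq 0}$ with $f(\cdot,0) = \psi$ and $f(z,t) = e^t z + \cdots$ whose image domains grow. Loewner's parametric method then supplies a continuous driving function $\kappa : [0,\infty) \to \partial\mathbb{D}$ for which
$$\frac{\partial f}{\partial t}(z,t) \;=\; -z\,\frac{\partial f}{\partial z}(z,t)\cdot\frac{1+\kappa(t) z}{1-\kappa(t) z}.$$
Passing to the logarithmic coefficients $\gamma_k(t)$ defined by $\log(f(z,t)/(e^t z)) = \sum_{k\geq 1}\gamma_k(t) z^k$ converts this PDE into a much simpler triangular system of ODEs driven by $\kappa(t)$. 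A classical chain of implications due to Lebedev-Milin shows that \emph{Milin's conjecture},
$$\sum_{k=1}^{n-1}(n-k)\!\left(k|\gamma_k(0)|^2 - \frac{4}{k}\right) \;\leq\; 0,$$
implies Robertson's conjecture for the odd square-root transform of $\psi$, which in turn implies $|a_n| \leq n$; so everything reduces to verifying Milin's inequality at $t=0$.

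The main obstacle -- and the heart of de Branges' contribution -- is establishing Milin's inequality via the Loewner flow. I would introduce auxiliary weight functions $\tau_1(t), \dots, \tau_{n-1}(t)$ vanishing at infinity and study
$$\Phi(t) \;=\; \sum_{k=1}^{n-1} \tau_k(t)\!\left(k|\gamma_k(t)|^2 - \frac{4}{k}\right).$$
With $\tau_k$ chosen to satisfy an appropriate coupled linear ODE system (for which explicit positive solutions exist, expressible in terms of Jacobi polynomials), one has $\Phi(\infty) = 0$ automatically, and then a direct computation of $\Phi'(t)$ using the Loewner equation expresses it as a quadratic form in the $\gamma_k(t)$ whose sign, for every admissible value of $\kappa(t)$, reduces to the nonnegativity
$$\sum_{k=0}^{m}\frac{(\alpha+2)_k}{(1)_k}\,P_k^{(\alpha,0)}(x)\;\geq\;0 \qquad (\alpha > -2,\; -1 \leq x \leq 1)$$
of a weighted sum of Jacobi polynomials -- the Askey-Gasper inequality. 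This single positivity statement, provable by a contiguous-relation recursion or by an integral representation for $P_k^{(\alpha,0)}$, is the delicate crux of the argument. Once it is in hand, the induced monotonicity of $\Phi$ gives $\Phi(0) \leq \Phi(\infty) = 0$, which is precisely Milin's inequality for $\psi$, and hence $|a_n| \leq n$.
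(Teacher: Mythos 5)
The paper does not prove this statement: de Branges' theorem is invoked as a black-box result, with citations to \cite{deBranges:APOT} and the expository survey \cite{Zorn:TBC}. There is therefore no internal proof to compare your proposal against.

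Taken on its own terms, your sketch correctly reproduces the architecture of the standard de Branges argument: compactness of the class $S$ and density of single-slit maps, embedding into a Loewner chain, passage to the logarithmic coefficients $\gamma_k(t)$, reduction of the Bieberbach bound to Milin's conjecture via the Lebedev--Milin inequality and Robertson's conjecture, and the weighted functional $\Phi(t)$ whose monotonicity is governed by the Askey--Gasper positivity for sums of Jacobi polynomials. The normalization you chose for $\log\bigl(f(z,t)/(e^t z)\bigr)$, with the constant $4/k$ in Milin's inequality, is internally consistent. But be clear that what you have is a roadmap rather than a proof: the explicit weight system $\tau_1, \dots, \tau_{n-1}$, the verification that $\Phi'(t)$ has the required sign for every admissible driving function $\kappa$, and the Askey--Gasper inequality itself are all asserted rather than established, and these three items carry essentially the entire difficulty of the theorem. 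One small slip in conventions: with $f(z,t) = e^t z + \cdots$ and growing image domains, the Loewner PDE is $\partial_t f = z\,\partial_z f \cdot (1+\kappa z)/(1-\kappa z)$, without the leading minus sign. For the purposes of the paper at hand, citing de Branges' theorem and deferring the proof to the literature --- as the author does --- is the appropriate choice.
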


As a consequence of de Branges' theorem, we can use the equality above to provide an upper bound for 
$$\frac{|\phi(0) - \phi(p)|}{|p-0|} \cdot \frac{|g(p) - p|}{|\phi(g(p)) - \phi(p)|} = \frac{|\psi(p)|}{|p|} \cdot \frac{|g(p) - p|}{|\psi(g(p)) - \psi(p)|}:$$

\begin{corollary} \label{distortionbound}
$$\frac{|\psi(p)|}{|p|} \cdot \frac{|g(p) - p|}{|\psi(g(p)) - \psi(p)|} \leq 4.$$
\end{corollary}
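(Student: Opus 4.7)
The plan is to bound the two factors in the product separately by exploiting the power series of $\psi$ and the coefficient bound $|a_n|\leq n$ furnished by de Branges's theorem.  Write $\psi(w) = w + a_2 w^2 + a_3 w^3 + \cdots$, and recall that a short algebraic computation from the Blaschke formula (using $\rho p = 1 - \sqrt{1-\rho^2}$) yields $g(p) = \lambda p^2$, so in particular $|g(p)| = p^2$.

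For the first factor, the coefficient bound gives immediately
\[
\left|\frac{\psi(p)}{p}\right| \;=\; \left|1 + \sum_{n\geq 2} a_n p^{n-1}\right| \;\leq\; \sum_{n\geq 1} n\,p^{n-1} \;=\; \frac{1}{(1-p)^2}.
\]
For the second factor, I would expand the difference as
\[
\psi(g(p)) - \psi(p) \;=\; (g(p) - p)\,\Biggl[\,1 + \sum_{n \geq 2} a_n \sum_{j=0}^{n-1} g(p)^j p^{n-1-j}\,\Biggr],
\]
bound the inner power sum by $\sum_{j=0}^{n-1} p^{2j}p^{n-1-j} \leq p^{n-1}/(1-p)$, and sum in $n$ using $|a_n| \leq n$ once more.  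Together with the reverse triangle inequality this yields the lower bound
\[
\left|\frac{\psi(g(p)) - \psi(p)}{g(p)-p}\right| \;\geq\; 1 - \frac{p(2-p)}{(1-p)^3}.
\]

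Combining the two estimates gives an upper bound of $(1-p)/(1-5p+4p^2-p^3)$ on the product in the Corollary, and the claim reduces to the polynomial inequality $3 - 19p + 16p^2 - 4p^3 \geq 0$ on $[0, 4-\sqrt{15}]$, which is elementary (the cubic is decreasing on this interval and positive at the endpoint $p = 4-\sqrt{15}$).  The main subtlety, and the only place where care is required, is in the second step: the crude bound $|g(p)| \leq p$ in the inner geometric series would produce a final constant of roughly $4.3$ at the endpoint, just failing to yield the bound $4$.  One genuinely needs the improvement $|g(p)| = p^2$, which reflects that $p$ is a critical point of $g$ (so that $g(p)$ is quadratically small) and is what makes the estimate close.
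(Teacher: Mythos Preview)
Your argument is correct and follows essentially the same approach as the paper: bound each factor separately via the power series of $\psi$ and de Branges's coefficient inequality, using $|g(p)|\le p^2$ in the difference quotient.  The only cosmetic difference is that the paper keeps the slightly sharper inner-sum bound $p^{n-1}(1-p^n)/(1-p)$ and then checks each factor is $<2$ individually, whereas you discard the $1-p^n$, combine the two factors, and reduce to the polynomial inequality $3-19p+16p^2-4p^3\ge 0$ on $[0,4-\sqrt{15}]$; both routes close with equal ease.
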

 
\begin{proof} Since $p = \frac{1 - \sqrt{1-\rho^2}}{\rho},$ we can compute $|g(p)| < p^2.$  Considering the power series expansion of $\psi$, we have
$$\frac{|\psi(g(p)) - \psi(p)|}{|g(p) - p|} \geq 1 - \sum_{n \geq 2} |a_n| \cdot |g(p)^{n-1} + g(p)^{n-2}p + ... + p^{n-1}|.$$
Therefore the theorem of de Branges and the estimate $|g(p)| < p^2$ yield
$$\frac{|\psi(g(p)) - \psi(p)|}{|g(p) - p|} \geq 1 - \sum_{n \geq 2} np^{n-1} \cdot \frac{p^n - 1}{p-1}.$$
Computation of the power series on the right gives
$$\frac{|\psi(g(p)) - \psi(p)|}{|g(p) - p|} \geq 1 - \frac{p(p^4-p^3-2p^2+3p+2)}{(1-p^2)^2} > \frac{1}{2}$$
with the right-hand inequality holding because $0 < p < 4 - \sqrt{15}.$ \\

On the other hand, we have 
$$\frac{|\psi(p)|}{p} \leq \sum_{n \geq 1} |a_n| p^{n-1} \leq \frac{1}{(1-p)^2} < 2,$$
and combining the two inequalities completes the proof of the corollary.
\end{proof}

Since in addition we have 
$$\frac{|g(p)-p|}{|p|} = |\frac{g(p)}{p} - 1| \geq 1 - |\frac{g(p)}{p}| \geq 1 - p > \frac{1}{2},$$
Corollary \ref{distortionbound} and Equation (\ref{estimate}) give
$$\frac{|a|}{|f^n_c(0)|} \leq 8.$$
By the chain rule, we have 

$$\rho = |(f^n_c)'(a)| = \prod_{0 \leq k < n}| f'(f^k(a))| = 2^n \prod_{0 \leq k < n} |f^k(a)|.$$
Since 0 is in the basin of attraction of $a$, $c$ lies in the Mandelbrot set and thus has modulus at most 2.  Consequently, $|z| >2 \Rightarrow |f_c(z)| = |z| \cdot |z + \frac{c}{z}| > |z|$, and so since $a$ is periodic, each iterate of $a$ has modulus bounded above by 2.  So we conclude that 
$$|a| \geq \rho \cdot \frac{1}{2^{2n-1}}.$$
Therefore 
$$|f_c^n(0)| \geq \rho \cdot \frac{1}{2^{2n+2}}.$$
One can use the same methods in the situation when $a$ has exact period $k \mid n$ to obtain the same lower bound, completing the proof of the theorem.
\end{proof}

Having achieved a lower bound for $|f_c^n(0)|$, we can now prove Theorem \ref{CA}.  We note that the choice of $\rho_n = \frac{1}{2^{2^{n-2}}}$ in the Theorem is the minimal value to achieve the tightest possible Zsigmondy result.

\begin{proof} [Proof of Theorem \ref{CA}] Define $D(n)$ as above for each $n$ with $\rho_n = \frac{1}{2^{2^{n-2}}}$, and write

$$S := \mathbb{C} - \bigcup_{n \in \mathbb{N}} D(n).$$
Suppose that $c = \frac{a}{b} \in S \cap (-2, -1)$.  Then if $a_n$ fails to have a primitive prime divisor for $n \geq 3,$ we have (as shown in preceding sections)

$$2 \log{|f^n_c(0)|} + 2^n \log{b} \leq 2 \omega(n) \log{|c|} + \sum_{q | n} 2^{\frac{n}{q}} \log{b},$$
where the sum is taken over distinct primes $q$ dividing $n$, and $\omega(n)$ is the number of distinct primes dividing $n$.  Write $s_2(n) = \sum_{q | n} 2^{\frac{n}{q}}.$  Then $|c| < 2$ and the lower bound for $|f^n_c(0)|$ obtained above yields 

$$(2^n - s_2(n)) \log{b} < (2 \omega(n) + 4n + 4 + 2^{n-1}) \log{2}.$$
Since $b \geq 2,$ this is false for all $n \geq 7.$  In fact for $b$ sufficiently large ($b \geq 13$ will suffice), this will be false for all $n \geq 3$, and the remaining finite number of cases can be checked to achieve the theorem.
\end{proof}

{\bf Acknowledgements.} I thank Laura DeMarco and Ramin Takloo-Bighash for valuable discussions crucial to this work, and the referees for suggesting improvements to the exposition.  I also thank Xavier Buff for his suggestions that led to \S 6, and Yann Bugeaud for helpful comments and references.  I am indebted to Patrick Ingram and Joe Silverman, whose paper \cite{IngramSilverman:PDIA} and related writings inspired this work, and the organizers of the Arizona Winter School 2010, where I was introduced to these questions.

\bibliography{research}{}

\begin{thebibliography}{10}

\bibitem{Bang:TU}
A.~S. Bang.
\newblock Taltheoretiske undersogelser.
\newblock {\em Tidsskrift Mat.}, 4(5):70--80, 130--137, 1886.

\bibitem{BennettBugeaud:ERFR}
M.~Bennett and Y.~Bugeaud.
\newblock Effective results for restricted rational approximation to quadratic
  irrationals.
\newblock {\em preprint: http://www.math.ubc.ca/~bennett/BeBu.pdf}, 2011.

\bibitem{Carmichael:OTNF}
R.~D. Carmichael.
\newblock On the numerical factors of the arithmetic forms $\alpha^n+\beta^n$.
\newblock {\em Annals of Mathematics}, 15(1/4):30--70, 1913.

\bibitem{deBranges:APOT}
L.~de~Branges.
\newblock A proof of the bieberbach conjecture.
\newblock {\em Acta Math.}, 154:137--152, 1985.

\bibitem{DoerksenHaensch:PPDI}
K.~Doerksen and A.~Haensch.
\newblock Primitive prime divisors in zero orbits of polynomials.
\newblock {\em (to appear in Integers)}, 2011.

\bibitem{FaberGranville:PFOD}
X.~Faber and A.~Granville.
\newblock Prime factors of dynamical sequences.
\newblock {\em J. Reine Angew. Math.}, 661:189--214, 2011.

\bibitem{FaberVoloch:OTNO}
X.~Faber and F.~Voloch.
\newblock On the number of places of convergence of newton's method over number
  fields.
\newblock {\em Journal de th\'eorie des nombres de Bordeaux}, 23(2):387--401,
  2011.

\bibitem{Ingram:LBOT}
P.~Ingram.
\newblock Lower bounds on the canonical height associated to the morphism
  $z^d+c$.
\newblock {\em Monatshefte f{\"u}r Mathematik}, 157:69--89, 2009.

\bibitem{IngramSilverman:PDIA}
P.~Ingram and J.~Silverman.
\newblock Primitive divisors in arithmetic dynamics.
\newblock {\em Mathematical Proceedings of the Cambridge Philosophical
  Society}, 146(2):289--302, 2009.

\bibitem{Jones:TDOP}
R.~Jones.
\newblock The density of prime divisors in the arithmetic dynamics of quadratic
  polynomials.
\newblock {\em J. Lond. Math. Soc.}, 78(2):523--544, 2008.

\bibitem{Thesis}
H.~Krieger.
\newblock PhD thesis, UIC, 2013.

\bibitem{Mahler:OTCF}
K.~Mahler.
\newblock On the continued fractions of quadratic and cubic irrationals.
\newblock {\em Ann. Mat. Pura Appl.}, 30(4):147--172, 1949.

\bibitem{Rice:PPDI}
B.~Rice.
\newblock Primitive prime divisors in polynomial arithmetic dynamics.
\newblock {\em Integers}, 7(1), 2007.

\bibitem{Schinzel:OTTO}
A.~Schinzel.
\newblock On two theorems of gel'fond and some of their applications.
\newblock {\em Acta Arith.}, 13:177--236, 1967.

\bibitem{Schinzel:PDOT}
A.~Schinzel.
\newblock Primitive divisors of the expression $a^n - b^n$ in algebraic number
  fields.
\newblock {\em J. Reine Angew. Math.}, 268/269:27--33, 1974.

\bibitem{Silverman:WCAT}
J.~Silverman.
\newblock Wieferich's criterion and the abc-conjecture.
\newblock {\em J. Number Theory}, 30(2):226--237, 1988.

\bibitem{Silverman:TAOD}
J.~Silverman.
\newblock {\em The Arithmetic of Dynamical Systems}.
\newblock Springer-Verlag New York, Inc., 2007.

\bibitem{SilvermanVoloch:ALGC}
J.~Silverman and F.~Voloch.
\newblock A local-global criterion for dynamics on $\mathbb{P}^1$.
\newblock {\em Acta Arith.}, 137(3):285--294, 2009.

\bibitem{Thue:UAAZ}
A.~Thue.
\newblock \"{U}ber ann\"aherungswerte algebraischer zahlen.
\newblock {\em J. Reine Angew. Math.}, 135:284--305, 1909.

\bibitem{Zorn:TBC}
P.~Zorn.
\newblock The bieberbach conjecture.
\newblock {\em Mathematics Magazine}, 59(3):131--148, 1986.

\bibitem{Zsigmondy:ZTDP}
K.~Zsigmondy.
\newblock Zur theorie der potenzreste.
\newblock {\em Monatsh. Math. Phys.}, 3(1):265--284, 1892.

\end{thebibliography}
\bibliographystyle{plain}

\end{document}